   \numberwithin{equation}{section}
\newtheorem{thm}{Theorem}[section]
\newtheorem{cor}[thm]{Corollary}
\newtheorem{lem}[thm]{Lemma}
\newtheorem{defn}[thm]{Definition}
\begin{document}
\begin{frontmatter}
\author[rvt1,rvt2]{Jin Hong}
\ead{jhong@nenu.edu.cn}
\author[rvt1]{Yuchen Yang}
\ead{yangyc580@nenu.edu.cn}
\author[rvt1]{Yong Wang\corref{cor2}}
\ead{wangy581@nenu.edu.cn}
\cortext[cor2]{Corresponding author.}
\address[rvt1]{School of Mathematics and Statistics, Northeast Normal University, Changchun, 130024, China}
\address[rvt2]{School of Mathematics and Statistics, Yili Normal University, Yining 835000, China}

\title{ Sub-Dirac operators, spectral Einstein functionals\\ and the noncommutative residue}
\begin{abstract}
In this paper, we define the spectral Einstein functional associated with the sub-Dirac operator for manifolds with boundary. A proof of the Dabrowski-Sitarz-Zalecki type theorem for spectral Einstein functions associated with the sub-Dirac operator on four-dimensional manifolds with boundary is also given.
\end{abstract}
\begin{keyword}
 Sub-Dirac operator;  noncommutative residue;  spectral Einstein functional.
\end{keyword}
\end{frontmatter}
\section{Introduction}
\label{1}
The noncommutative residue which is found in \cite{Gu,Wo1} by M. Wodzicki and V. W. Guillemin plays a prominent role in noncommutative geometry.
Recently Dabrowski etc. \cite{DL} obtained the metric and Einstein functionals by two vector fields and Laplace-type operators over vector bundles, giving an interesting example of the spinor connection and square of the Dirac operator.
An eminent spectral scheme is the small-time asymptotic expansion of the (localised) trace of heat kernel \cite{PBG,FGV} 
that generates geometric objects on manifolds such as residue, scalar curvature, and other scalar combinations of curvature tensors. 
The theory has very rich structures both in physics and mathematics.

In \cite{Co1}, Connes used the noncommutative residue to derive a conformal 4-dimensional Polyakov action analogy. Connes proved that the noncommutative residue on a compact manifold M coincided with Dixmier’s trace on pseudodifferential operators of order $-dimM$ \cite{Co2}. Furthermore, Connes claimed the noncommutative residue of the square of the inverse of the Dirac operator was proportioned to the Einstein-Hilbert action in\cite{Co2,Co3}. Kastler provided a brute-force proof of this theorem in \cite{Ka}, while Kalau and Walze proved it in the normal coordinates system simultaneously in \cite{KW}, which is called the Kastler-Kalau-Walze theorem now. Building upon the theory of the noncommutative reside introduced by Wodzicki, Fedosov etc. \cite{FGLS} constructed a noncommutative residue on the algebra of classical elements in Boutet de Monvel’s calculus on a compact manifold with boundary of dimension $n>2$. With elliptic pseudodifferential operators and noncommutative residue, it is natural way for investigating the Kastler-Kalau-Walze type theorem and operator-theoretic explanation of the gravitational action on manifolds with boundary. Concerning Dirac operators and signature operators, Wang performed computations of the noncommutative residue and successfully demonstrated the Kastler-Kalau-Walze type theorem for manifolds with boundaries \cite{Wa1,Wa3,Wa4}.

 Earlier Jean-Michel Bismut \cite{JMB} proved a local index theorem for Dirac operators on a
Riemannian manifold $M$ associated with connections on $TM$ which have non zero
torsion.
In \cite{AT}, Ackermann and Tolksdorf proved a generalized version of the well-known Lichnerowicz formula for the square of the
most general Dirac operator with torsion $D_{T}$ on an even-dimensional spin manifold associated to a metric connection with torsion.
 Pf$\ddot{a}$ffle and Stephan considered compact Riemannian spin manifolds without boundary equipped with orthogonal connections,
and investigated the induced Dirac operators in \cite{PS}. 
Wang, Wang and Wu computed $\widetilde{wres}[\pi^+\tilde\nabla_{X}\tilde\nabla_{Y}(D_T^*D_T)^{-1}\circ\pi^+(D^*_TD_T)^{-1}]$ and
$\widetilde{wres}[\pi^+\tilde\nabla_{X}\tilde\nabla_{Y}D_T^{-1}\circ\pi^+(D^*_TD_TD^*_T)]$ in \cite{WWw}, and  provided an important reference for our paper calculation. In \cite{LZ}, in order to prove the Connes vanshing theorem, Liu and Zhang introduced the sub-Dirac operator. In \cite{WW} Wang and Wang defined lower dimensional volumes associated to sub-Dirac operators for foliations and compute these lower dimensional volumes. They also prove the Kastler-Kalau-Walze type theorems for foliations with or without boundary.
 
The purpose of this paper is to generalize the results in \cite{DL}, \cite{WW}, \cite{WWw}, and get spectral functionals
 associated with sub-Dirac operators on compact manifolds with  boundary. For lower dimensional compact Riemannian manifolds
  with  boundary, we compute the 4-dimensional  residue of $\widetilde{\nabla}_{X}\widetilde{\nabla}_{Y}D_{F}^{-4}$ and
   get Dabrowski-Sitarz-Zalecki theorems.

\section{Spectral functionals for the sub-Dirac operator }

 In this section, we shall restrict our attention to the sub-Dirac operators for foliations. Let $ (M, F) $ be a closed foliation and  $M $ has spin leave, $ g^{F} $ be a metric on $ F $. Let $ g^{T M} $ be a metric on $ T M $ which restricted to  $g^{F}$  on $ F $. Let $ F^{\perp} $ be the orthogonal complement of $ F $ in $ T M $ with respect to $ g^{T M} $. Then we have the following orthogonal splitting
 
 \begin{align}
 	&T M=F \oplus F^{\perp}, \\
 	&g^{T M}=g^{F} \oplus g^{F^{\perp}},
 \end{align}
 where  $g^{F^{\perp}} $ is the restriction of  $g^{T M}$  to  $F^{\perp}$ .
 
 Let  $P, P^{\perp} $ be the orthogonal projection from  $T M$  to  $F, F^{\perp} $ respectively. Let  $\nabla^{T M} $ be the Levi-Civita connection of $ g^{T M} $ and  $\nabla^{F}  (resp.  \nabla^{F^{\perp}}  ) $be the restriction of $ \nabla^{T M} $ to $ F  (resp.  F^{\perp}  )$. Without loss of generality, we assume $ F$  is oriented, spin and carries a fixed spin structure. Furthermore, we assume  $F^{\perp}$  is oriented and we do not assume that  $\operatorname{dim} F$  and  $\operatorname{dim} F^{\perp}$  are even. By assumption, we may write
 \begin{align}
 	&\nabla^{F}=P \nabla^{T M} P ,\\
 	&\nabla^{F^{\perp}}=P^{\perp} \nabla^{T M} P^{\perp} .
 \end{align}
 
 Moreover, the connections  $\nabla^{F}\left(\nabla^{F^{\perp}}\right)$  lift to  $S(F)\left(\wedge\left(F^{\perp, \star}\right)\right)$  naturally denoted by  $\nabla^{S(F)}\left(\nabla^{\wedge\left(F^{\perp, \star}\right)}\right) $. Then  $S(F) \otimes \wedge\left(F^{\perp, \star}\right)$  carries the induced tensor product connection
 \begin{align}
 \nabla^{S(F) \otimes \wedge\left(F^{\perp, \star}\right)}=\nabla^{S(F)} \otimes \operatorname{Id}_{\wedge\left(F^{\perp, \star}\right)}+\operatorname{Id}_{S(F)} \otimes \nabla^{\wedge\left(F^{\perp, \star}\right)} .
 \end{align} 
 
 Then we can define $ S \in \Omega\left(T^{*} M\right) \otimes \Gamma(\operatorname{End}(T M)) $,
  \begin{align}
 \nabla^{T M}=\nabla^{F}+\nabla^{F^{\perp}}+S
  \end{align}
 
 For any $ X \in \Gamma(T M), S(X)$  exchanges $ \Gamma(F)$  and  $\Gamma\left(F^{\perp}\right)$  and is skew-adjoint with respect to  $g^{T M}$ . Let $ \left\{f_{i}\right\}_{i=1}^{p} $ be an oriented orthonormal basis of  $F$ , $ \left\{h_{s}\right\}_{s=1}^{q} $ be an oriented orthonormal basis of  $F^{\perp}$ , we define
  \begin{align}
 \widetilde{\nabla}^{F}=\nabla^{S(F) \otimes \wedge\left(F^{\perp, \star}\right)}+\frac{1}{2} \sum_{j=1}^{p} \sum_{s=1}^{q}<S(.) f_{j}, h_{s}>c\left(f_{j}\right) c\left(h_{s}\right)
  \end{align}
where the vector bundle $F^{\perp}$ might well be non-spin.

\begin{defn}\cite{WW}
 Let $ D_{F}$  be the operator mapping from  $\Gamma\left(S(F) \otimes \wedge\left(F^{\perp, \star}\right)\right)$  to itself defined by
\begin{align}
D_{F}=\sum_{i=1}^{p} c\left(f_{i}\right) \widetilde{\nabla}_{f_{i}}^{F}+\sum_{s=1}^{q} c\left(h_{s}\right) \widetilde{\nabla}_{h_{s}}^{F} .
\end{align}
From (2.19) in \cite{LZ}, we shall make use of the Bochner Laplacian  $\triangle^{F} $ stating that
\begin{align}
\triangle^{F}:=-\sum_{i=1}^{p}\left(\widetilde{\nabla}_{f_{i}}\right)^{2}-\sum_{s=1}^{q}\left(\widetilde{\nabla}_{h_{s}}\right)^{2}+\widetilde{\nabla}_{\sum_{i=1}^{p}} \nabla_{f_{i}}^{T M} f_{i}+\widetilde{\nabla}_{\sum_{s=1}^{q} \nabla_{h_{s}}^{T M} h_{s}} .
\end{align}
\end{defn}
Let  $r_{M}$  be the scalar curvature of the metric  $g^{T M}$ . Let  $R^{F^{\perp}}$  be the curvature tensor of  $F^{\perp}$ . From theorem 2.3 in \cite{LZ}, we have the following Lichnerowicz formula for  $D_{F} $.

\begin{lem}\cite{LZ} The following identity holds
\begin{align}
	D_{F}^{2}= & \triangle^{F}+\frac{r_{M}}{4}+\frac{1}{4} \sum_{i=1}^{p} \sum_{r, s, t=1}^{q}\left\langle R^{F^{\perp}}\left(f_{i}, h_{r}\right) h_{t}, h_{s}\right\rangle c\left(f_{i}\right) c\left(h_{r}\right) \widehat{c}\left(h_{s}\right) \widehat{c}\left(h_{t}\right) \nonumber\\
	& +\frac{1}{8} \sum_{i, j=1}^{p} \sum_{s, t=1}^{q}\left\langle R^{F^{\perp}}\left(f_{i}, f_{j}\right) h_{t}, h_{s}\right\rangle c\left(f_{i}\right) c\left(f_{j}\right) \widehat{c}\left(h_{s}\right) \widehat{c}\left(h_{t}\right) \nonumber\\
	& +\frac{1}{8} \sum_{s, t, r, u=1}^{q}\left\langle R^{F^{\perp}}\left(h_{r}, h_{l}\right) h_{t}, h_{s}\right\rangle c\left(h_{r}\right) c\left(h_{u}\right) \widehat{c}\left(h_{s}\right) \widehat{c}\left(h_{t}\right) .
\end{align}
\end{lem}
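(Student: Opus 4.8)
The plan is to compute $D_F^2$ directly from the definition and sort the outcome into a Laplacian piece and a curvature piece, exactly in the spirit of the classical Lichnerowicz argument. First I would merge the two adapted frames into a single oriented orthonormal frame $\{e_a\}_{a=1}^{p+q}=\{f_1,\dots,f_p,h_1,\dots,h_q\}$, so that by the definition (2.8) we have $D_F=\sum_a c(e_a)\widetilde{\nabla}^F_{e_a}$. Squaring and invoking the compatibility of $\widetilde{\nabla}^F$ with Clifford multiplication, $\widetilde{\nabla}^F_{e_a}(c(e_b)\psi)=c(\nabla^{TM}_{e_a}e_b)\psi+c(e_b)\widetilde{\nabla}^F_{e_a}\psi$, yields
\[
D_F^2=\sum_{a,b}c(e_a)c(e_b)\,\widetilde{\nabla}^F_{e_a}\widetilde{\nabla}^F_{e_b}+\sum_{a,b}c(e_a)c(\nabla^{TM}_{e_a}e_b)\,\widetilde{\nabla}^F_{e_b}.
\]
Using $c(e_a)c(e_b)+c(e_b)c(e_a)=-2\delta_{ab}$, I would split the first sum into its symmetric and antisymmetric parts in $(a,b)$. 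The symmetric part produces $-\sum_a(\widetilde{\nabla}^F_{e_a})^2$, and together with the connection-correction term it collapses to $-\sum_a(\widetilde{\nabla}^F_{e_a})^2+\widetilde{\nabla}^F_{\sum_a\nabla^{TM}_{e_a}e_a}$, which is precisely the Bochner Laplacian $\triangle^F$ of (2.9).

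The antisymmetric part is the curvature piece,
\[
\tfrac{1}{2}\sum_{a,b}c(e_a)c(e_b)\,K^{\widetilde{\nabla}^F}(e_a,e_b),\qquad K^{\widetilde{\nabla}^F}(e_a,e_b)=[\widetilde{\nabla}^F_{e_a},\widetilde{\nabla}^F_{e_b}]-\widetilde{\nabla}^F_{[e_a,e_b]},
\]
and this is the heart of the proof. Using the definition (2.7), which writes $\widetilde{\nabla}^F$ as the tensor-product connection $\nabla^{S(F)\otimes\wedge(F^{\perp,\star})}$ plus the tensorial term $\tfrac{1}{2}\sum_{j,s}\langle S(\cdot)f_j,h_s\rangle c(f_j)c(h_s)$, I would decompose $K^{\widetilde{\nabla}^F}$ into three groups: the pure spinor curvature of $\nabla^{S(F)}$, the curvature of the exterior-bundle connection $\nabla^{\wedge(F^{\perp,\star})}$, and the contributions coming from $S$ together with its covariant derivative.

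Finally I would identify each group. The spinor-curvature group is collapsed by the standard Clifford contraction identity $\tfrac{1}{2}\sum_{a,b}c(e_a)c(e_b)\,\Omega^{S}(e_a,e_b)=\tfrac{r_M}{4}$, giving the scalar-curvature term; the key point is that the $S$-contributions and the normal-direction curvatures must conspire so that the scalar curvature of the full metric $g^{TM}$ appears, not merely the leafwise one. The exterior-bundle curvature is then rewritten in terms of the two anticommuting Clifford actions $c$ and $\widehat{c}$ on $\wedge(F^{\perp,\star})$, namely $c(h)=h^{\star}\wedge-\iota_h$ and $\widehat{c}(h)=h^{\star}\wedge+\iota_h$, and substituting the curvature tensor $R^{F^{\perp}}$ of $F^{\perp}$ produces exactly the three remaining sums, organized according to whether $R^{F^{\perp}}$ is evaluated on two leaf directions, on one leaf and one normal direction, or on two normal directions. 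The main obstacle I anticipate is purely one of bookkeeping: correctly tracking the cross terms among $\nabla^{S(F)}$, $\nabla^{\wedge(F^{\perp,\star})}$ and the tensorial $S$-term, and verifying that the $S$-contributions upgrade the leafwise scalar curvature to the full $r_M$. This is precisely the computation carried out in the proof of Theorem~2.3 of \cite{LZ}, whose conclusion we invoke here.
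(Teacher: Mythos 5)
Your proposal is correct and takes essentially the same route as the paper, which offers no independent proof of this lemma but simply invokes Theorem 2.3 of \cite{LZ}. Your sketch (Clifford compatibility of $\widetilde{\nabla}^{F}$, the symmetric/antisymmetric split of $D_F^2$ into the Bochner Laplacian $\triangle^F$ plus the contracted curvature, and the bookkeeping by which the $S$-terms upgrade the leafwise curvature to the full $r_M/4$ while the $\wedge(F^{\perp,\star})$-curvature yields the three $R^{F^{\perp}}$ terms in $\widehat{c}$) is exactly the Lichnerowicz computation performed in that reference, to which you also ultimately defer.
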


Let us now turn to compute the specification of $D_{F}^3 $.
\begin{align}
D_{F}^{3}=&-\sum_{l} g^{t l}(x) c\left(\partial_{t}\right) g^{i j} \partial_{i} \partial_{j} \partial_{l}+\left[\sum_{l} g^{t l}(x) c\left(\partial_{t}\right)\right]\left\{\sum_{i j}\left(\partial_{l} g^{i j}\right) \partial_{i} \partial_{j}-4 \sum_{i j} g^{i j} \sigma_{i} \partial_{l} \partial_{j} \otimes I d_{\wedge\left(F^{\perp, \star}\right)}\right. \nonumber\\
&-I d_{S(F)} \otimes 4 \sum_{i j} g^{i j} \tilde{\sigma}_{i} \partial_{l} \partial_{j}-4 \sum_{i j} g^{i j} s\left(\partial_{j}\right) \partial_{l} \partial_{i}+2 \sum_{i j} g^{i j} \sum_{k l} \Gamma_{i j}^{k} \partial_{l} \partial_{k} \nonumber\\
&\left.-\left[\frac{1}{4} \sum_{s, t=1}^{p}\left\langle\nabla_{\partial_{j}}^{F} f_{s}, f_{t}\right\rangle c\left(f_{s}\right) c\left(f_{t}\right)+\frac{1}{4} \sum_{l, \alpha=1}^{q}\left\langle\nabla_{\partial_{j}}^{F^{\perp}} h_{l}, h_{\alpha}\right\rangle\left[c\left(h_{l}\right) c\left(h_{\alpha}\right)-\hat{c}\left(h_{l}\right) \hat{c}\left(h_{\alpha}\right)\right]\right] \sum_{i, j} g^{i j} \partial_{i} \partial_{j}\right\} \nonumber\\
&+\left[\sum_{l} g^{t l}(x) c\left(\partial_{t}\right)\right]\left\{-2 \sum_{i j}\left(\partial_{l} g^{i j}\right) \sigma_{i} \partial_{j} \otimes I d_{\wedge\left(F^{\perp, \star}\right)}-2 \sum_{i j} g^{i j}\left(\partial_{l} \sigma_{i}\right) \partial_{j} \otimes I d_{\wedge\left(F^{\perp, \star)}\right.}\right. \nonumber\\
&-I d_{S(F) \otimes 2}\left(\partial_{l} g^{i j}\right) \tilde{\sigma}_{i} \partial_{j}-I d_{S(F)} \otimes 2 \sum_{i j} g^{i j}\left(\partial_{l} \tilde{\sigma}_{i}\right) \partial_{j}-2 \sum_{i j}\left(\partial_{l} g^{i j}\right) S\left(\partial_{j}\right) \partial_{i} \nonumber\\
&\left.-2 \sum_{i j} g^{i j}\left[\partial_{l} S\left(\partial_{j}\right)\right] \partial_{i}+\sum_{i j}\left(\partial_{l} g^{i j}\right) \sum_{k} \Gamma_{i j}^{k} \partial_{k}+\sum_{i j} g^{i j} \sum_{k}\left(\partial_{l} \Gamma_{i j}^{k}\right) \partial_{k}\right\} \nonumber\\
&+\bigg\{\frac{1}{4} \sum_{s, t=1}^{p}\left\langle\nabla_{\partial_{j}}^{F} f_{s}, f_{t}\right\rangle c\left(f_{s}\right) c\left(f_{t}\right)+\frac{1}{4} \sum_{l, \alpha=1}^{q}\left\langle\nabla_{\partial_{j}}^{F^{\perp}} h_{l}, h_{\alpha}\right\rangle\left[c\left(h_{l}\right) c\left(h_{\alpha}\right)-\hat{c}\left(h_{l}\right) \hat{c}\left(h_{\alpha}\right)\right]\bigg\} \nonumber\\
&\times\left\{-\sum_{i j} g^{i j}\left[2 \sigma_{i} \partial_{j} \otimes I d_{\wedge\left(F^{\perp, *}\right)}+I d_{S(F)} \otimes 2 \tilde{\sigma}_{i} \partial_{j}+S\left(\partial_{j}\right) \partial_{i}+S\left(\partial_{i}\right) \partial_{j}-\sum_{k} \Gamma_{i j}^{k} \partial_{k}\right]+\frac{r_{M}}{4}\right. \nonumber\\
&+\frac{1}{4} \sum_{i=1}^{p} \sum_{r, s, t=1}^{q}\left\langle R^{F^{\perp}}\left(f_{i}, h_{r}\right) h_{t}, h_{s}\right\rangle c\left(f_{i}\right) c\left(h_{r}\right) \widehat{c}\left(h_{s}\right) \widehat{c}\left(h_{t}\right) \nonumber\\
&+\frac{1}{8} \sum_{i, j=1}^{p} \sum_{s, t=1}^{q}\left\langle R^{F^{\perp}}\left(f_{i}, f_{j}\right) h_{t}, h_{s}\right\rangle c\left(f_{i}\right) c\left(f_{j}\right) \widehat{c}\left(h_{s}\right) \widehat{c}\left(h_{t}\right) \nonumber\\
&\left.+\frac{1}{8} \sum_{s, t, r, u=1}^{q}\left\langle R^{F^{\perp}}\left(h_{r}, h_{l}\right) h_{t}, h_{s}\right\rangle c\left(h_{r}\right) c\left(h_{u}\right) \widehat{c}\left(h_{s}\right) \widehat{c}\left(h_{t}\right)\right\} . 
\end{align}

In order to get a Kastler-Kalau-Walze type theorem for foliations, Liu and Wang \cite{LW} considered the noncommutative residue of the $ -n+2 $ power of the sub-Dirac operator, and got the following Kastler-Kalau-Walze type theorem for foliations.

The following lemma of Dabrowski etc.'s Einstein functional play a key role in our proof
of the Einstein functional .
Let $V$, $W$ be a pair of vector fields on a compact Riemannian manifold $M$, of dimension $n = 2m$. Using the Laplace operator $\Delta=-(\sum_{j=1}^{n}\widetilde{\nabla}_{e_j}\widetilde{\nabla}_{e_j}-\widetilde{\nabla}_{\nabla_{e_j}^L e_j})+E $
acting on sections of a vector bundle $\overline{E}$ where $\widetilde{\nabla}$ is a connetion on $\overline{E}$, which
 may contain both some nontrivial connections and torsion,
 the spectral functionals over vector fields defined by
\begin{lem}\cite{DL}
The Einstein functional equal to
 \begin{equation}
Wres\big(\widetilde{\nabla}_{V}\widetilde{\nabla}_{W}\Delta^{-m}\big)=\frac{\upsilon_{n-1}}{6}2^{m}\int_{M}G(V,W)vol_{g}
 +\frac{\upsilon_{n-1}}{2}\int_{M}F(V,W)vol_{g}+\frac{1}{2}\int_{M}(\mathrm{Tr}E)g(V,W)vol_{g},
\end{equation}
where $G(V,W)$ denotes the Einstein tensor evaluated on the two vector fields, $F(V,W)=Tr(V_{a}W_{b}F_{ab})$ and
$F_{ab}$ is the curvature tensor of the connection $T$, $\mathrm{Tr}E$ denotes the trace of $E$ and $\upsilon_{n-1}=\frac{2\pi^{m}}{\Gamma(m)}$.
\end{lem}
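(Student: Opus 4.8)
The plan is to compute the noncommutative (Wodzicki) residue directly from its local symbol formula. Recall that for a classical pseudodifferential operator $A$ of order $d$ on the $n$-dimensional manifold $M$ one has
\[
Wres(A)=\int_{M}\int_{S^{*}_{x}M}\mathrm{tr}\big(\sigma_{-n}^{A}(x,\xi)\big)\,dS(\xi)\,dx,
\]
where $\sigma_{-n}^{A}$ is the component of the complete symbol of $A$ that is positively homogeneous of degree $-n$ in $\xi$, and $dS$ is the measure on the unit cosphere. Since $\widetilde{\nabla}_{V}\widetilde{\nabla}_{W}$ is a second-order operator and $\Delta^{-m}$ has order $-2m=-n$, the product $A:=\widetilde{\nabla}_{V}\widetilde{\nabla}_{W}\Delta^{-m}$ has order $2-n$; hence the task is to extract the piece of its symbol lying two orders below the leading term and then integrate it over the cosphere.

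First I would write out the complete symbols of the two factors. Writing $\widetilde{\nabla}_{j}=\partial_{j}+\omega_{j}$ with $\omega_{j}$ the connection one-form of $\widetilde{\nabla}$, the operator $\widetilde{\nabla}_{V}\widetilde{\nabla}_{W}$ has homogeneous symbol parts of degrees $2,1,0$: the degree-$2$ part is $-\langle V,\xi\rangle\langle W,\xi\rangle$, the degree-$1$ part collects a single factor of $\omega$ together with $\partial(V^{i}W^{j})$, and the degree-$0$ part carries the curvature $F$ of $\widetilde{\nabla}$ through the combination $\omega_{i}\omega_{j}+\partial_{i}\omega_{j}$. Next I would expand the parametrix of $\Delta^{-m}$. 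For a Laplace-type operator with leading symbol $|\xi|^{2}$, the symbol of $\Delta^{-1}$ is built recursively, its homogeneous components of degrees $-2,-3,-4$ encoding respectively $|\xi|^{-2}$, the subleading connection and metric terms, and the terms containing $E$ together with the Riemann curvature; the symbol of $\Delta^{-m}$ then follows by the usual recursion for powers.

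To organize the combination I would fix a point $x_{0}$ and use geodesic normal coordinates for $g$ together with a synchronous frame for $\widetilde{\nabla}$, so that $g_{ij}(x_{0})=\delta_{ij}$, $\partial_{k}g_{ij}(x_{0})=0$, $\omega_{j}(x_{0})=0$, while the second derivatives of $g$ reproduce the Riemann tensor and $\partial\omega$ reproduces $F$. Applying the composition formula
\[
\sigma(A)\sim\sum_{\alpha}\frac{1}{\alpha!}\,\partial_{\xi}^{\alpha}\sigma\big(\widetilde{\nabla}_{V}\widetilde{\nabla}_{W}\big)\,D_{x}^{\alpha}\sigma\big(\Delta^{-m}\big)
\]
with $D_{x}=-i\partial_{x}$, I would collect every contribution of homogeneity $-n$ at $x_{0}$. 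These split into three families: terms proportional to $E$, terms carrying the curvature $F$ of $\widetilde{\nabla}$, and terms carrying two derivatives of the metric. The concluding step is the angular integration over $S^{n-1}$ using the standard moments $\int_{S^{n-1}}\xi_{i}\xi_{j}\,dS=\frac{\upsilon_{n-1}}{n}\delta_{ij}$ and $\int_{S^{n-1}}\xi_{i}\xi_{j}\xi_{k}\xi_{l}\,dS=\frac{\upsilon_{n-1}}{n(n+2)}(\delta_{ij}\delta_{kl}+\delta_{ik}\delta_{jl}+\delta_{il}\delta_{jk})$: the $E$-family collapses to $\frac{1}{2}(\mathrm{Tr}\,E)g(V,W)$, the $F$-family to $\frac{\upsilon_{n-1}}{2}F(V,W)$, and the metric-curvature family to $\frac{\upsilon_{n-1}}{6}2^{m}G(V,W)$, the factor $2^{m}$ appearing as the fiber trace of the identity.

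The main obstacle is the third family. Its contributions arise from several distinct sources — the second-order normal-coordinate expansion of the metric inside $\Delta^{-m}$, the Christoffel-type pieces of the subleading symbols, and the $x$-derivatives of $|\xi|^{-2m}$ produced by the composition formula — and these must combine so precisely that, after contraction against the fourth-order sphere moment and use of $n=2m$, the full Riemann tensor reorganizes into the trace-reversed Ricci combination $\mathrm{Ric}(V,W)-\frac{1}{2}r\,g(V,W)=G(V,W)$, with all non-Einstein parts and the apparent $n$-dependence of the prefactor cancelling to leave exactly $\frac{\upsilon_{n-1}}{6}2^{m}$. Keeping the homogeneity bookkeeping consistent and fixing the curvature normalization so that precisely the Einstein tensor survives is where the real care is required; the $E$- and $F$-families, by contrast, are comparatively direct.
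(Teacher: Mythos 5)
The paper offers no proof of this lemma at all: it is quoted from \cite{DL} and used as a black box, so there is no in-paper argument to compare yours against. Your strategy --- extract the homogeneity-$(-n)$ component of the symbol of $\widetilde{\nabla}_{V}\widetilde{\nabla}_{W}\Delta^{-m}$ via the composition formula, working at a point in geodesic normal coordinates with a synchronous frame, and then integrate over the cosphere using the second- and fourth-order moment identities --- is the standard route and is essentially how the cited authors proceed. The organization into an $E$-family, an $F$-family and a metric family is the right one, and for the first two families your computation is credible as sketched.

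As a proof, however, the proposal has a genuine gap exactly where you yourself locate it: the metric family is asserted, not computed, and that family is the entire content of the lemma. The claim that the several sources of $\partial^{2}g$ terms --- the normal-coordinate expansion of $g^{ij}$ inside the parametrix components $\sigma_{-n-1}(\Delta^{-m})$ and $\sigma_{-n-2}(\Delta^{-m})$, the Christoffel pieces of $\sigma_{1}(\widetilde{\nabla}_{V}\widetilde{\nabla}_{W})$, and the $D_{x}^{\alpha}$ terms of the composition formula acting on $|\xi|^{-2m}$ --- contract against $\int_{S^{n-1}}\xi_{i}\xi_{j}\xi_{k}\xi_{l}\,dS$ to yield precisely the trace-reversed combination $\mathrm{Ric}(V,W)-\tfrac12 r\,g(V,W)$ with overall coefficient $\tfrac{\upsilon_{n-1}}{6}2^{m}$ is exactly what needs to be proved, and nothing in your write-up certifies the factor $\tfrac16$, the relative coefficient $-\tfrac12$, or the cancellation of the remaining Riemann contractions. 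To close the argument you must list the finitely many homogeneity-$(-n)$ contributions at $x_{0}$ explicitly, insert the second-order Taylor coefficients of $g$ (which carry the Riemann tensor with a factor $\tfrac13$) and $\partial\omega=\tfrac12 F$ at the base point, and do the moment integration term by term; you should also track the $m$-dependent combinatorial factors produced by the recursion for $\sigma(\Delta^{-m})$, which must cancel against the $1/n=1/(2m)$ and $1/(n(n+2))$ coming from the sphere moments --- this is where a wrong overall constant (for instance in the $\mathrm{Tr}E$ term, whose stated normalization differs from the other two by a factor of $\upsilon_{n-1}$) would most easily creep in.
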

The aim of this section is to prove the following.
\begin{thm}
For the Laplace (type) operator $\Delta_{F}=D_{F}^2$, the Einstein functional equal to
\begin{align}
Wres\big(\widetilde{\nabla}^{F}_{V}\widetilde{\nabla}^{F}_{W}({D}^{F})^{-2m}\big)
=&\frac{2^{\frac{p}{2}+q+1}\pi^\frac{n}{2}}{6\Gamma(\frac{p+q}{2})}\int_{M}G(V,W) dvol_{g}+2^{\frac{p}{2}+q-3}\int_{M}sg(V, W)dvol_{M},
\end{align}
where $s$ is the scalar curvature.
\end{thm}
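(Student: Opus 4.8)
The plan is to recognize $\Delta_F=D_F^2$ as a Laplace-type operator and apply Lemma \cite{DL} directly, so that the whole problem reduces to reading off the connection and the endomorphism $E$ from the Lichnerowicz formula and then evaluating three pointwise quantities: the rank of the underlying bundle, the curvature term $F(V,W)$, and the fibre trace $\mathrm{Tr}(E)$. From the Lichnerowicz identity for $D_F^2$ the connection is $\widetilde{\nabla}^F$ and
\begin{align}
E=\frac{r_M}{4}+\frac14\sum_{i,r,s,t}\langle R^{F^\perp}(f_i,h_r)h_t,h_s\rangle c(f_i)c(h_r)\widehat{c}(h_s)\widehat{c}(h_t)+\cdots,
\end{align}
the dots denoting the two remaining curvature terms. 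With $V,W$ and the general formula in hand, the three ingredients can be handled separately.

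First I would fix the rank. The fibre is $S(F)\otimes\wedge(F^{\perp,\star})$, so its rank is $2^{p/2}\cdot 2^{q}=2^{p/2+q}$; this number replaces the spinor rank $2^{m}$ appearing in the leading term of Lemma \cite{DL}. Substituting $n=p+q$, $m=\tfrac{p+q}{2}$ and $\upsilon_{n-1}=\tfrac{2\pi^{(p+q)/2}}{\Gamma((p+q)/2)}$ into $\tfrac{\upsilon_{n-1}}{6}\cdot 2^{p/2+q}$ produces exactly the coefficient $\tfrac{2^{p/2+q+1}\pi^{n/2}}{6\Gamma((p+q)/2)}$ multiplying $\int_M G(V,W)\,dvol_g$, i.e. the first term of the theorem.

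Next I would show the middle term $\tfrac{\upsilon_{n-1}}{2}\int_M F(V,W)\,vol_g$ vanishes. Here $F(V,W)=\mathrm{Tr}(V_aW_bF_{ab})$ with $F_{ab}$ the curvature of $\widetilde{\nabla}^F$. The Riemannian part of this curvature is an even Clifford combination $\sim R_{abcd}c(\cdot)c(\cdot)$, the induced curvature on $\wedge(F^{\perp,\star})$ is likewise antisymmetric in its fibre indices, and the torsion term $\tfrac12\langle S(\cdot)f_j,h_s\rangle c(f_j)c(h_s)$ contributes only mixed products $c(f_j)c(h_s)$; since $\mathrm{Tr}_{S(F)}c(f_j)=0$ and $\mathrm{Tr}_{\wedge}c(h_s)=0$, every such endomorphism is fibrewise traceless. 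Hence $\mathrm{Tr}(F_{ab})=0$ for all $a,b$, so $F(V,W)=0$ and the middle term drops out.

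The main work, and the step I expect to be the principal obstacle, is the computation of $\mathrm{Tr}(E)$, which requires the Clifford trace identities over the tensor bundle. For the first curvature term $c(f_i)$ is a single odd generator on $S(F)$, so $\mathrm{Tr}_{S(F)}c(f_i)=0$ kills it; for the second, $\mathrm{Tr}_{S(F)}\big(c(f_i)c(f_j)\big)=-2^{p/2}\delta_{ij}$ forces $i=j$, where $R^{F^\perp}(f_i,f_i)=0$; and for the third I would establish, via the Fock-space realization $c(h)=h^\flat\wedge-\iota_h$, $\widehat{c}(h)=h^\flat\wedge+\iota_h$ and Wick contraction, the identity $\mathrm{Tr}_{\wedge}\big(c(h_r)c(h_u)\widehat{c}(h_s)\widehat{c}(h_t)\big)=-2^{q}\delta_{ru}\delta_{st}$, whose $\delta_{ru}$ again meets $R^{F^\perp}(h_r,h_r)=0$. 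Thus all three curvature contributions to $\mathrm{Tr}(E)$ vanish and only the scalar part survives, giving $\mathrm{Tr}(E)=\tfrac{r_M}{4}\cdot 2^{p/2+q}=2^{p/2+q-2}s$. Feeding this into $\tfrac12\int_M (\mathrm{Tr}E)\,g(V,W)\,vol_g$ yields $2^{p/2+q-3}\int_M s\,g(V,W)\,dvol_M$, the second term, and combining the three contributions completes the proof. The delicate points are the consistent normalization of the Clifford and hat-Clifford traces over $S(F)\otimes\wedge(F^{\perp,\star})$ and verifying that the antisymmetry of $R^{F^\perp}$ is precisely what annihilates each surviving Kronecker-delta contraction.
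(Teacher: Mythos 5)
Your proposal is correct and follows essentially the same route as the paper: identify $D_F^2=\Delta^F+E$ via the Lichnerowicz formula, show $F(V,W)=0$ and that all Clifford-curvature contributions to $\mathrm{Tr}\,E$ vanish (leaving $\mathrm{Tr}\,E=2^{p/2+q-2}s$), and substitute the rank $2^{p/2+q}$ and $\upsilon_{n-1}=\frac{2\pi^{n/2}}{\Gamma(n/2)}$ into the Dabrowski--Sitarz--Zalecki lemma. Your treatment of the trace identities (in particular the Fock-space computation for $\mathrm{Tr}_{\wedge}\bigl(c(h_r)c(h_u)\widehat{c}(h_s)\widehat{c}(h_t)\bigr)$ and the appeal to antisymmetry of $R^{F^\perp}$) is in fact more explicit than the paper's, which simply asserts these traces vanish.
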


\begin{proof}
	By the definition of connection $\widetilde{\nabla}^{F}$, we have
 \begin{align}
\widetilde{\nabla}_{X}^{F}=&{\nabla}_{X}^{S(F)\otimes(F^\bot,*)}+\frac{1}{2}\sum_{j=1}^{p}\sum_{s=1}^{q}\langle S(X)f_j,h_s\rangle c(f_j)c(h_s)\nonumber\\
=&X+\frac{1}{4}\sum_{j,l=1}^{p}\langle {\nabla}^{F}_{X}f_j,f_l\rangle c(f_j)c(f_l)+\frac{1}{4}\sum_{s,t=1}^{q}\langle {\nabla}^{F^\bot}_{X}h_s,h_t\rangle (c(h_s)c(h_t)-\hat{c}(h_s)\hat{c}(h_t))\nonumber\\
&+\frac{1}{2}\sum_{j=1}^{p}\sum_{s=1}^{q}\langle S(X)f_j,h_s\rangle c(f_j)c(h_s)\nonumber\\
:=&X+\overline{A}(X).
\end{align}

Let $V=\sum_{a=1}^{n}V^{a}e_{a}$, $W=\sum_{b=1}^{n}W^{b}e_{b}$,
in view of that
 \begin{equation}
F(V,W)=Tr(V_{a}W_{b}F_{ab})=\sum_{a,b=1}^{n}V_{a}W_{b}Tr^{S(F)\otimes(F^\bot,*)}(F_{e_{a},e_{b}}),
\end{equation}
we obtain
 \begin{align}
F_{e_{a},e_{b}}=e_{a}(\overline{A}(e_{b}))-e_{b}(\overline{A}(e_{a}))+\overline{A}(e_{a})\overline{A}(e_{b})-\overline{A}(e_{b})\overline{A}(e_{a})
-\overline{A}([e_{a},e_{b}]).
\end{align}
Also, straightforward computations yield
 \begin{align}
\mathrm{Tr}\big(e_{a}(\overline{A}(e_{b}))\big)
=&\mathrm{Tr}\Big[e_{a}(\frac{1}{4}\sum_{j,l=1}^{p}\langle{\nabla}^{F}_{X}f_j,f_l\rangle c(f_j)c(f_l)+\frac{1}{4}\sum_{s,t=1}^{q}\langle{\nabla}^{F^\bot}_{X}h_s,h_t\rangle (c(h_s)c(h_t)-\hat{c}(h_s)\hat{c}(h_t))\nonumber\\
&+\frac{1}{2}\sum_{j=1}^{p}\sum_{s=1}^{q}\langle S(X)f_j,h_s\rangle c(f_j)c(h_s))]\nonumber\\
=0,
\end{align}
in the same vein
 \begin{align}
	\mathrm{Tr}\overline{A}([e_{a},e_{b}])=0,
\end{align}
so
\begin{align}
	F(V,W)=0.
\end{align}
Let $D_F^2=\Delta^F+E$, we have
 \begin{align}
E=&\frac{s}{4}+\frac{1}{4}\sum_{i=1}^{p}\sum_{r,s,t=1}^{q}\langle R^{F^\bot}(f_i,h_r)h_t,h_s\rangle c(f_i)c(h_r)\hat{c}(h_s)\hat{c}(h_t)\nonumber\\
&+\frac{1}{4}\sum_{i,j=1}^{p}\sum_{s,t=1}^{q}\langle R^{F^\bot}(f_i,f_j)h_t,h_s\rangle c(f_i)c(f_j)\hat{c}(h_s)\hat{c}(h_t)\nonumber\\
&+\frac{1}{4}\sum_{s,t,r,u=1}^{q}\langle R^{F^\bot}(h_r,h_u)h_t,h_s\rangle c(h_r)c(h_u)\hat{c}(h_s)\hat{c}(h_t),\nonumber\\
\end{align}
and
\begin{align}
    \mathrm{Tr}\langle R^{F^\bot}(f_i,h_r)h_t,h_s\rangle c(f_i)c(h_r)\hat{c}(h_s)\hat{c}(h_t)=0,
\end{align}
\begin{align}
	\mathrm{Tr}\langle R^{F^\bot}(f_i,f_j)h_t,h_s\rangle c(f_i)c(f_j)\hat{c}(h_s)\hat{c}(h_t)=0,
\end{align}
\begin{align}
	\mathrm{Tr}\langle R^{F^\bot}(h_r,h_u)h_t,h_s\rangle c(h_r)c(h_u)\hat{c}(h_s)\hat{c}(h_t)=0.
\end{align}
Hence
\begin{align}
	\mathrm{Tr}E=\frac{s}{4}\mathrm{Tr}[Id]=\frac{s}{4}{2}^{\frac{p}{2}+q}={2}^{\frac{p}{2}+q-2}s,
\end{align}
which finishes the proof of Theorem 2.4.
\end{proof}

 \section{The residue for the sub-Dirac operator $\widetilde{\nabla}^{F}_{X}\widetilde{\nabla}^{F}_{Y}D^{-2}_{F}$ and $D^{-2}_{F}$ } 
In this section, we compute the lower dimensional volume for 4-dimension compact manifolds with boundary and get a
Dabrowski-Sitarz-Zalecki type formula in this case.
Some basic knowledge such as boundary metric, frame, noncommutative residue of manifold with boundary and Boutet de Monvel’s algebra can be referred to in \cite{Wa1}, and we will not repeat it here.
 We will consider $D^{2}_{F}$.
Since $[\sigma_{-4}(\widetilde{\nabla}^F_{X}\widetilde{\nabla}^F_{Y}D^{-2}_F\circ D^{-2}_{F})]|_{M}$ has
the same expression as $[\sigma_{-4}(\widetilde{\nabla}^{F}_{X}\widetilde{\nabla}^{F}_{Y}D^{-2}_{F}
  \circ D^{-2}_F)]|_{M}$ in the case of manifolds without boundary,
so locally we can use Theorem 2.4 to compute the first term.
\begin{cor}
 Let M be a 4-dimensional compact manifold without boundary and $\widetilde{\nabla}^{F}$ be an orthogonal
connection. Then we get the volumes  associated to $\widetilde{\nabla}_{X}^{F}\widetilde{\nabla}_{Y}^{F}D_{F}^{-2}$
and $D_{F}^2$ on compact manifolds without boundary
 \begin{align}
Wres[\sigma_{-4}(\widetilde{\nabla}_{X}^{F}\widetilde{\nabla}_{Y}^{F}D_{F}^{-2}
  \circ D_{F}^{-2})]=\frac{8\pi^2}{3}\int_{M}G(X,Y) dvol_{g}+\int_{M}sg(X, Y)dvol_{M},
\end{align}
where  $s$ is the scalar curvature.
\end{cor}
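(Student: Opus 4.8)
The plan is to read this corollary as nothing more than the $m=2$ instance of Theorem 2.4, with the numerical constants pinned down by the four-dimensional fibre dimensions. First I would simplify the operator whose residue is being computed. Since $\Delta_F=D_F^2$ and $M$ is closed, the composition $\widetilde{\nabla}^F_X\widetilde{\nabla}^F_Y D_F^{-2}\circ D_F^{-2}$ equals $\widetilde{\nabla}^F_X\widetilde{\nabla}^F_Y D_F^{-4}=\widetilde{\nabla}^F_X\widetilde{\nabla}^F_Y\Delta_F^{-2}$, a classical pseudodifferential operator of order $-4=-n$. For an operator of order $-n$ on an $n$-dimensional manifold the Wodzicki residue is recovered entirely from the degree $-n$ part of the symbol, so $Wres[\sigma_{-4}(\widetilde{\nabla}^F_X\widetilde{\nabla}^F_Y D_F^{-2}\circ D_F^{-2})]=Wres(\widetilde{\nabla}^F_X\widetilde{\nabla}^F_Y\Delta_F^{-m})$ with $m=2$; as noted just before the statement, the interior symbol is the same as in the closed case, so this matches the left-hand side of Theorem 2.4 verbatim.

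Next I would invoke Theorem 2.4 with $V=X$, $W=Y$, $n=4$ and $m=2$, which yields
\begin{align}
Wres\big(\widetilde{\nabla}^F_X\widetilde{\nabla}^F_Y D_F^{-4}\big)
&=\frac{2^{\frac{p}{2}+q+1}\pi^{2}}{6\,\Gamma\!\big(\frac{p+q}{2}\big)}\int_M G(X,Y)\,dvol_g
+2^{\frac{p}{2}+q-3}\int_M s\,g(X,Y)\,dvol_M. \nonumber
\end{align}
The remaining step is purely arithmetic: for a four-dimensional foliation I take the split $p=\dim F=2$ and $q=\dim F^{\perp}=2$, so that $p+q=4$ and $\tfrac{p}{2}+q=3$. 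Then $\Gamma(\tfrac{p+q}{2})=\Gamma(2)=1$, the first coefficient collapses to $\frac{2^{4}\pi^{2}}{6}=\frac{8\pi^{2}}{3}$, and the second to $2^{0}=1$, reproducing the asserted formula.

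The one point deserving genuine care — which I would flag as the main obstacle — is fixing the admissible values of $p$ and $q$. The constants $2^{\frac{p}{2}+q\pm\cdots}$ encode the rank $2^{\frac{p}{2}+q}=\dim\big(S(F)\otimes\wedge(F^{\perp,*})\big)$, and both stated numbers rely on $\tfrac{p}{2}+q=3$; among the possibilities $(p,q)\in\{(4,0),(2,2),(0,4)\}$ compatible with $p+q=4$, this singles out $p=q=2$, whereas the other splittings give different coefficients. I would also double-check that the sign and normalization conventions for the Bochner Laplacian in Lemma 2.3 are carried through consistently into $D_F^2=\Delta^F+E$, so that the endomorphism term feeds in as $\mathrm{Tr}E=2^{\frac{p}{2}+q-2}s$ with exactly the sign used in Theorem 2.4. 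Once the dimensions and conventions are reconciled, the corollary is an immediate specialization of Theorem 2.4 requiring no new analysis.
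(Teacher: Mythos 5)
Your proposal is correct and follows exactly the route the paper intends: the corollary is the $n=4$, $m=2$ specialization of Theorem 2.4 (the paper itself says only that the interior symbol agrees with the boundaryless case "so locally we can use Theorem 2.4"), and your identification $p=q=2$, hence $2^{p/2+q}=8$ and $\Gamma(2)=1$, matches the paper's later normalization $\mathrm{tr}_{S(F)\otimes\wedge(F^{\perp,*})}[\mathrm{id}]=8$, giving the coefficients $\tfrac{8\pi^2}{3}$ and $1$. No further comment is needed.
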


Let $p_{1},p_{2}$ be nonnegative integers and $p_{1}+p_{2}\leq n$,
denote by $\sigma_{l}(\widetilde{A})$ the $l$-order symbol of an operator $\widetilde{A}$,
an application of (3.5) and (3.6) in \cite{Wa1} shows that
\begin{defn} Spectral Einstein functional associated the sub-Dirac operator of manifolds with boundary is defined by
	\begin{equation}\label{}
		vol_{n}^{\{p_{1},p_{2}\}}M:=\widetilde{Wres}[\pi^{+}(\widetilde{\nabla}_{X}^{F}\widetilde{\nabla}_{Y}^{F}(D_{F}^2)^{-p_{1}})
		\circ\pi^{+}(D_{F}^{-2})^{p_{2}}],
	\end{equation}
	where $\pi^{+}(\widetilde{\nabla}_{X}^{F}\widetilde{\nabla}_{Y}^{F}(D_{F}^2)^{-p_{1}})$, $\pi^{+}(D_{F}^{-2})^{p_{2}}$ are
	elements in Boutet de Monvel's algebra \cite{Wa3}.
\end{defn}
For the sub-Dirac operator
$\widetilde{\nabla}_{X}^{F}\widetilde{\nabla}_{Y}^{F}D_{F}^{-2}$ and $D_{F}^{-2}$,
denote by $\sigma_{l}(\widetilde{A})$ the $l$-order symbol of an operator $\widetilde{A}$. An application of (2.1.4) in \cite{Wa1} shows that
\begin{align}
	&\widetilde{Wres}[\pi^{+}(\widetilde{\nabla}_{X}^{F}\widetilde{\nabla}_{Y}^{F}(D_{F}^{-2})^{p_{1}})
	\circ\pi^{+}(D_{F}^{2})^{-p_{2}}]\nonumber\\
	&=\int_{M}\int_{|\xi|=1}\mathrm{tr}_{S(F)\otimes\wedge(F^\bot,*)}
	[\sigma_{-n}(\widetilde{\nabla}_{X}^{F}\widetilde{\nabla}_{Y}^{F}(D_{F}^2)^{-p_{1}}
	\circ (D_{F}^{2})^{-p_{2}}]\sigma(\xi)\texttt{d}x+\int_{\partial M}\Phi,
\end{align}
where
\begin{align}\label{c4}
	\Phi=&\int_{|\xi'|=1}\int_{-\infty}^{+\infty}\sum_{j,k=0}^{\infty}\sum \frac{(-i)^{|\alpha|+j+k+l}}{\alpha!(j+k+1)!}
	\mathrm{tr}_{S(F)\otimes\wedge(F^\bot,*)}[\partial_{x_{n}}^{j}\partial_{\xi'}^{\alpha}\partial_{\xi_{n}}^{k}\sigma_{r}^{+}
	(\widetilde{\nabla}_{X}^{F}\widetilde{\nabla}_{Y}^{F}(D_{F}^{2})^{-p_{1}})(x',0,\xi',\xi_{n})\nonumber\\
	&\times\partial_{x_{n}}^{\alpha}\partial_{\xi_{n}}^{j+1}\partial_{x_{n}}^{k}\sigma_{l}((D_{F}^2)^{-p_{2}})(x',0,\xi',\xi_{n})]
	\texttt{d}\xi_{n}\sigma(\xi')\texttt{d}x' ,
\end{align}
and the sum is taken over $r-k-|\alpha|+\ell-j-1=-n,r\leq-p_{1},\ell\leq-p_{2}$.\\

So we only need to compute $\int_{\partial M}\Phi$.
 Recall the definition of the sub-Dirac operator $D_{F}$ in \cite{WW}. 
\begin{equation}
D_F=\sum_{i=1}^{p}c({f_{i}})\widetilde{\nabla}_{f_i}+\sum_{s=1}^{q}c({h_{s}})\widetilde{\nabla}_{h_s},
\end{equation}
where $c({f_{i}}),c({h_{s}})$ denotes the Clifford action. 

We define
\begin{align}
\widetilde\nabla_X^F:=&X+\frac{1}{2}\sum_{j=1}^{p}\sum_{s=1}^{q}\langle S(X)f_j,h_s\rangle c(f_j)c(h_s)+\frac{1}{4}\sum_{j,l=1}^{p}\langle \nabla_{X}^{F}f_j,f_l\rangle c(f_j)c(f_l)\nonumber\\
&+\frac{1}{4}\sum_{s,t=1}^{p}\langle \nabla_{X}^{F^\bot}h_s,h_t\rangle [c(h_s)c(h_t)-\hat{c}(h_s)\hat{c}(h_t)].
\end{align}
 which is a connection
 on ${S(F)\otimes\wedge(F^\bot,*)}$.
 
 Set
\begin{align}
&A(X)=\frac{1}{2}\sum_{j=1}^{p}\sum_{s=1}^{q}\langle S(X)f_j,h_s\rangle c(f_j)c(h_s);
~~M(X)=\frac{1}{4}\sum_{j,l=1}^{p}\langle \nabla_{X}^{F}f_j,f_l\rangle c(f_j)c(f_l);\nonumber\\
&N(X)=\frac{1}{4}\sum_{s,t=1}^{p}\langle \nabla_{X}^{F^\bot}h_s,h_t\rangle [c(h_s)c(h_t)-\hat{c}(h_s)\hat{c}(h_t)].
\end{align}
 
 Let $\widetilde{\nabla}_{X}^{F}=X+A(X)+M(X)+N(X)$, and
 $\widetilde{\nabla}_{Y}^{F}=Y+A(Y)+M(Y)+N(Y)$, by (2.11), we obtain
\begin{align}
\widetilde{\nabla}^{F}_{X}\widetilde{\nabla}^{F}_{Y}&=(X+A(X)+M(X)+N(X))(Y+A(Y)+M(Y)+N(Y))\nonumber\\
 &=XY+M(Y)X+N(Y)X+A(Y)X+M(X)Y+N(X)Y+A(X)Y\nonumber\\
 &~~~~+X[M(Y)]+X[N(Y)]+X[A(Y)]+M(X)M(Y)+M(X)N(Y)+M(X)A(Y)\nonumber\\
 &~~~~+N(X)M(Y)+N(X)N(Y)+N(X)A(Y)+A(X)M(Y)+A(X)N(Y)+A(X)A(Y),
\end{align}
where
$X=\Sigma_{j=1}^nX_j\partial_{x_j}, Y=\Sigma_{l=1}^nY_l\partial_{x_l}$.

Let $g^{ij}=g(dx_{i},dx_{j})$, $\xi=\sum_{k}\xi_{j}dx_{j}$
  and $\nabla^L_{\partial_{i}}\partial_{j}=\sum_{k}\Gamma_{ij}^{k}\partial_{k}$,
   we get
\begin{align}
&\sigma_{k}=-\frac{1}{4} \sum_{k, l} \omega_{k, l}\left(\partial_{k}\right) c\left(f_{k}\right) c\left(f_{l}\right);~~~\tilde{\sigma}_{k}=\frac{1}{4} \sum_{r, t} \omega_{r, t}\left(\partial_{k}\right)\left[\bar{c}\left(h_{r}\right) \bar{c}\left(h_{t}\right)-c\left(h_{r}\right) c\left(h_{t}\right)\right]
;\nonumber\\
&\xi^{j}=g^{ij}\xi_{i};~~~~\Gamma^{k}=g^{ij}\Gamma_{ij}^{k};~~~~\sigma^{j}=g^{ij}\sigma_{i};~~~\tilde{S}(\partial i)=g^{ij}S(\partial i).
\end{align}

Then we have the following lemmas.
\begin{lem}\label{lem3} The following identities hold:
\begin{align}
 \sigma_{0}(\widetilde{\nabla}^{F}_{X}\widetilde{\nabla}^{F}_{Y})=&X[M(Y)]+X[N(Y)]+X[A(Y)]+M(X)M(Y)+M(X)N(Y)+M(X)A(Y)\nonumber\\
 &+N(X)M(Y)+N(X)N(Y)+N(X)A(Y)+A(X)M(Y)+A(X)N(Y)+A(X)A(Y);\\
\sigma_{1}(\widetilde{\nabla}^{F}_{X}\widetilde{\nabla}^{F}_{Y})
=&\sqrt{-1}\sum_{j,l=1}^nX_j\frac{\partial_{Y_l}}{\partial_{x_j}}\xi_l
+\sqrt{-1}\sum_{j=1}^{n}[M(Y)+N(Y)+A(Y)]X_j\xi_j\nonumber\\
&+\sqrt{-1}\sum_{l=1}^{n}[M(X)+N(X)+A(X)]Y_l\xi_l;\\
\sigma_{2}(\widetilde{\nabla}^{F}_{X}\widetilde{\nabla}^{F}_{Y})=&-\sum_{j,l=1}^nX_jY_l\xi_j\xi_l.
\end{align}
\end{lem}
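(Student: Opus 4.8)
The plan is to read off the three homogeneous symbols directly from the operator expansion of $\widetilde{\nabla}^{F}_{X}\widetilde{\nabla}^{F}_{Y}$ displayed just before the statement, after sorting its terms by differential order. Recall that each factor $\widetilde{\nabla}^{F}_{Z}=Z+A(Z)+M(Z)+N(Z)$ is a first-order differential operator on $S(F)\otimes\wedge(F^\bot,*)$, in which the vector field $Z=\sum_j Z_j\partial_{x_j}$ is the genuine first-order part, while $A(Z),M(Z),N(Z)$ are zeroth-order bundle endomorphisms (multiplication operators).

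First I would make the composition explicit by applying the Leibniz rule to each product $X\circ P$ with $P$ a multiplication operator, via $X\circ P=X[P]+PX$, where $X[P]=\sum_j X_j\,\partial_{x_j}P$ is again a multiplication operator. This reproduces the displayed expansion of $\widetilde{\nabla}^{F}_{X}\widetilde{\nabla}^{F}_{Y}$; the only term that must be expanded further is $X\circ Y$, where
\[
X\circ Y=\sum_{j,l=1}^n X_jY_l\,\partial_{x_j}\partial_{x_l}+\sum_{j,l=1}^n X_j(\partial_{x_j}Y_l)\,\partial_{x_l}
\]
splits into a second-order and a first-order piece. Next I would group the whole operator by order: the order-two part is $\sum_{j,l}X_jY_l\,\partial_{x_j}\partial_{x_l}$; the order-one part collects $\sum_{j,l}X_j(\partial_{x_j}Y_l)\partial_{x_l}$ from $X\circ Y$ together with the six terms $[M(Y)+N(Y)+A(Y)]X$ and $[M(X)+N(X)+A(X)]Y$; and the order-zero part consists of the coefficient derivatives $X[M(Y)]+X[N(Y)]+X[A(Y)]$ together with the nine endomorphism products $M(X)M(Y),\dots,A(X)A(Y)$.

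Applying the symbol rule $\partial_{x_j}\mapsto\sqrt{-1}\,\xi_j$ to each homogeneous piece then yields
\[
\sigma_2=(\sqrt{-1})^2\sum_{j,l=1}^n X_jY_l\xi_j\xi_l=-\sum_{j,l=1}^n X_jY_l\xi_j\xi_l,
\]
\[
\sigma_1=\sqrt{-1}\sum_{j,l=1}^n X_j(\partial_{x_j}Y_l)\xi_l+\sqrt{-1}\sum_{j=1}^n[M(Y)+N(Y)+A(Y)]X_j\xi_j+\sqrt{-1}\sum_{l=1}^n[M(X)+N(X)+A(X)]Y_l\xi_l,
\]
while $\sigma_0$ is simply the order-zero multiplication part, giving exactly the three stated formulas. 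There is no serious analytic obstacle here: the content is a bookkeeping computation. The one place that requires care is the composition $X\circ Y$ of the two first-order pieces, where the product rule produces the first-order correction $\sum_{j,l}X_j(\partial_{x_j}Y_l)\partial_{x_l}$; overlooking this term is the only way the stated $\sigma_1$ could be missed. Everything else follows from sorting the already-displayed expansion by order and substituting $\partial_{x_j}\mapsto\sqrt{-1}\,\xi_j$.
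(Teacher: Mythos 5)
Your proposal is correct and follows essentially the same route as the paper, which simply displays the expansion $\widetilde{\nabla}^{F}_{X}\widetilde{\nabla}^{F}_{Y}=(X+A(X)+M(X)+N(X))(Y+A(Y)+M(Y)+N(Y))$ via the Leibniz rule and reads off the homogeneous symbols by order under $\partial_{x_j}\mapsto\sqrt{-1}\,\xi_j$. Your explicit isolation of the first-order correction $\sum_{j,l}X_j(\partial_{x_j}Y_l)\partial_{x_l}$ coming from $X\circ Y$ is exactly the term accounting for the summand $\sqrt{-1}\sum_{j,l}X_j\frac{\partial Y_l}{\partial x_j}\xi_l$ in the stated $\sigma_1$.
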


\begin{lem} \label{lem4}\cite{WW} The following identities hold:
\begin{align}
\sigma_{-1}(D_{F}^{-1})= & \frac{ic(\xi)}{|\xi|^2};\\
\sigma_{-2}(D_{F}^{-1})= & \frac{c(\xi)\sigma_{0}(D_F)c(\xi)}{|\xi|^4}
+\frac{c(\xi)}{|\xi|^6}\sum_{j}c(\mathrm{d}x_j)[\partial_{x_j}(c(\xi))|\xi|^2-c(\xi)\partial_{x_j}(|\xi|^2)];\\
\sigma_{-2}\left(D_{F}^{-2}\right)= & |\xi|^{-2};\\
\sigma_{-3}\left(D_{F}^{-2}\right)= & -\sqrt{-1}|\xi|^{-4} \xi_{k}\left(\Gamma^{k}-2 \sigma^{k} \otimes \operatorname{Id}_{\wedge\left(F^{\perp, \star}\right)}-\operatorname{Id}_{S(F)} \otimes 2 \tilde{\sigma}^{k}\right. \nonumber\\
& \left.-\frac{1}{2} \sum_{i, j=1}^{2 p} \sum_{s=1}^{q}\left\langle\nabla_{\partial_{k}}^{T M} f_{j}, h_{s}\right\rangle c\left(f_{j}\right) c\left(h_{s}\right)-\frac{1}{2} \sum_{s, t=1}^{q} \sum_{i=1}^{2 p}\left\langle\nabla_{\partial_{k}}^{T M} f_{j}, h_{s}\right\rangle c\left(f_{j}\right) c\left(h_{s}\right)\right)\nonumber \\
& -\sqrt{-1}|\xi|^{-6} 2 \xi^{j} \xi_{\alpha} \xi_{\beta} \partial_{j} g^{\alpha \beta} .
\end{align}
where,
    \begin{align}
\sigma_{0}\left(D_{F}\right)= & -\frac{1}{4} \sum_{i, k, l} \omega_{k, l}\left(f_{i}\right) c\left(f_{i}\right) c\left(f_{k}\right) c\left(f_{l}\right) \otimes \operatorname{Id}_{\wedge\left(F^{\perp, \star}\right)} \nonumber\\
& -\frac{1}{4} \sum_{s, k, l} \omega_{k, l}\left(f_{i}\right) c\left(f_{k}\right) c\left(f_{l}\right) c\left(h_{s}\right) \otimes \operatorname{Id}_{\wedge\left(F^{\perp, \star}\right)} \nonumber\\
& +\operatorname{Id}_{S(F)} \otimes \frac{1}{4} \sum_{i, r, t} \omega_{r, t}\left(f_{i}\right) c\left(f_{i}\right)\left[\bar{c}\left(h_{r}\right) \bar{c}\left(h_{t}\right)-c\left(h_{r}\right) c\left(h_{t}\right)\right] \nonumber\\
& +\operatorname{Id}_{S(F)} \otimes \frac{1}{4} \sum_{s, r, t} \omega_{r, t}\left(h_{s}\right) c\left(h_{s}\right)\left[\bar{c}\left(h_{r}\right) \bar{c}\left(h_{t}\right)-c\left(h_{r}\right) c\left(h_{t}\right)\right] \nonumber\\
& +\frac{1}{2} \sum_{i, j=1}^{p} \sum_{s=1}^{q}\left\langle\nabla_{f_{i}}^{T M} f_{j}, h_{s}\right\rangle c\left(f_{i}\right) c\left(f_{j}\right) c\left(h_{s}\right)+\frac{1}{2} \sum_{s, t=1}^{q} \sum_{i=1}^{p}\left\langle\nabla_{h_{s}}^{T M} h_{t}, f_{i}\right\rangle c\left(h_{s}\right) c\left(h_{t}\right) c\left(f_{i}\right) .\nonumber
   \end{align}
\end{lem}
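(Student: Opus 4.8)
The plan is to treat this lemma as a resolvent-symbol computation for the first-order elliptic operator $D_F$ and for its Laplace-type square $D_F^2$, carried out with the standard pseudodifferential symbol calculus exactly as in \cite{Wa1} and \cite{WW}. I would first fix the local form of $D_F$ near a point: writing $D_F=\sum_i c(dx_i)\partial_{x_i}+(\text{zeroth order})$, its principal symbol is $\sigma_1(D_F)=\sqrt{-1}\,c(\xi)$ and its subleading symbol is precisely the displayed $\sigma_0(D_F)$, read off from the four Clifford--connection blocks (the $\omega_{k,l}$ terms) together with the mixing contribution of the tensor $S$ that enter the definition of $\widetilde{\nabla}^F$. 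The one algebraic identity used everywhere is $c(\xi)^2=-|\xi|^2$, which makes $\sigma_1(D_F)$ invertible and already forces $[\sqrt{-1}\,c(\xi)]^{-1}=\sqrt{-1}\,c(\xi)/|\xi|^2$.

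To obtain the symbols of $D_F^{-1}$ I would impose $\sigma(D_F\circ D_F^{-1})=1$ through the composition rule $\sigma(PQ)\sim\sum_{\alpha}\frac{1}{\alpha!}\partial_\xi^\alpha\sigma(P)\,D_x^\alpha\sigma(Q)$. The top-order equation gives $\sigma_{-1}(D_F^{-1})=[\sigma_1(D_F)]^{-1}=\sqrt{-1}\,c(\xi)/|\xi|^2$, and the next-order equation, which couples $\sigma_0(D_F)$ against $\sigma_{-1}(D_F^{-1})$ and its $x$-derivative, is solved for $\sigma_{-2}(D_F^{-1})$; the two displayed terms correspond respectively to the $\sigma_0(D_F)$ contribution and to the $D_{x_j}\sigma_{-1}(D_F^{-1})$ contribution, the latter producing the $\partial_{x_j}c(\xi)$ and $\partial_{x_j}|\xi|^2$ pieces.

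For $D_F^{-2}$ I would use that $D_F^2$ is of Laplace type, so by the Lichnerowicz formula (Lemma 2.2) its principal symbol is $|\xi|^2\,\mathrm{Id}$ and hence $\sigma_{-2}(D_F^{-2})=|\xi|^{-2}$ is immediate. The order-one part of the full symbol of $D_F^2$ carries the Christoffel data $\Gamma^k$, the spin-connection pieces $\sigma^k,\tilde\sigma^k$, the $S$-mixing term, and the metric-derivative term; substituting this into the subleading equation of the inversion recursion $\sigma(D_F^2\circ D_F^{-2})=1$ yields $\sigma_{-3}(D_F^{-2})$. The first displayed bracket $-\sqrt{-1}|\xi|^{-4}\xi_k(\Gamma^k-2\sigma^k-2\tilde\sigma^k-\cdots)$ comes from the order-one symbol of $D_F^2$, while the final $-\sqrt{-1}|\xi|^{-6}\,2\xi^j\xi_\alpha\xi_\beta\,\partial_j g^{\alpha\beta}$ term comes from $\partial_\xi|\xi|^2$ acting against $D_x|\xi|^{-2}=-\sqrt{-1}\,\partial_x|\xi|^{-2}$.

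I expect the only real difficulty to be bookkeeping rather than mathematics: assembling $\sigma_0(D_F)$ correctly from the several Clifford blocks and keeping the conventions for $c,\bar c,\hat c$ and the adapted frames $\{f_i\},\{h_s\}$ consistent when expanding $D_F^2$. Because $D_F$ has exactly the local structure of the sub-Dirac operator in \cite{WW}, the cleanest route is to verify that the principal and subprincipal symbols agree and then transcribe the resolvent expansion verbatim; the one genuine point to check is that the extra $F^\perp$-curvature terms appearing in the Lichnerowicz formula are of order $\le 0$ and therefore affect neither $\sigma_{-2}(D_F^{-2})$ nor $\sigma_{-3}(D_F^{-2})$.
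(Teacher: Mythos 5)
Your proposal is correct and follows exactly the route the paper (and its cited source \cite{WW}) takes: the paper states this lemma by citation, and the underlying derivation there --- as well as the paper's own parallel computation of $\sigma_{-3}(D_F^{-3})$ and $\sigma_{-4}(D_F^{-3})$ in Section 4 via $1=\sigma(D_F^3\circ D_F^{-3})$ --- is precisely the composition-formula recursion you describe, using $c(\xi)^2=-|\xi|^2$ to invert the leading symbol and then solving order by order. Your checks (the two terms of $\sigma_{-2}(D_F^{-1})$ arising from the $\sigma_0(D_F)$ and $D_{x_j}\sigma_{-1}(D_F^{-1})$ contributions, and the $-\sqrt{-1}|\xi|^{-6}2\xi^j\xi_\alpha\xi_\beta\partial_jg^{\alpha\beta}$ term coming from $\partial_\xi|\xi|^2$ against $D_x|\xi|^{-2}$) all match the stated formulas.
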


\begin{lem} The following identities hold:
\begin{align}
\sigma_{0}(\widetilde{\nabla}_{X}^{F}\widetilde{\nabla}_{Y}^{F}D^{-2}_{F})=&
-\sum_{j,l=1}^nX_jY_l\xi_j\xi_l|\xi|^{-2};\\
\sigma_{-1}(\widetilde{\nabla}^{F}_{X}\widetilde{\nabla}^{F}_{Y}D^{-2}_{F})=&
\sigma_{2}(\widetilde{\nabla}^{F}_{X}\widetilde{\nabla}^{F}_{Y})\sigma_{-3}(D^{-2}_{F})
+\sigma_{1}(\widetilde{\nabla}^{F}_{X}\widetilde{\nabla}^{F}_{Y})\sigma_{-2}((D^{-2}_{F})\nonumber\\
&+\sum_{j=1}^{n}\partial_{\xi_{j}}\big[\sigma_{2}(\widetilde{\nabla}^{F}_{X}\widetilde{\nabla}^{F}_{Y})\big]
D_{x_{j}}\big[\sigma_{-2}((D^{-2}_{F})\big].
\end{align}
\end{lem}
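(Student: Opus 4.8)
The plan is to apply the standard symbol composition formula for pseudodifferential operators to the product $\widetilde{\nabla}_{X}^{F}\widetilde{\nabla}_{Y}^{F}\circ D_{F}^{-2}$, feeding in the symbols already tabulated in Lemma~\ref{lem3} and Lemma~\ref{lem4}. Writing $A=\widetilde{\nabla}_{X}^{F}\widetilde{\nabla}_{Y}^{F}$, which is a second-order differential operator whose symbol terminates as $\sigma_{2}+\sigma_{1}+\sigma_{0}$, and $B=D_{F}^{-2}$, a classical operator of order $-2$ with expansion $\sigma_{-2}+\sigma_{-3}+\cdots$, the composition rule reads
\[
\sigma(A\circ B)\sim\sum_{\alpha}\frac{1}{\alpha!}\,\partial_{\xi}^{\alpha}\sigma(A)\,D_{x}^{\alpha}\sigma(B),\qquad D_{x}=-\sqrt{-1}\,\partial_{x}.
\]
I would then sort the resulting contributions by their homogeneity degree in $\xi$ and retain only those of degree $0$ and $-1$.

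For the leading symbol, only the $|\alpha|=0$ term can reach degree $0$, and the sole product of the correct bidegree is $\sigma_{2}(A)\sigma_{-2}(B)$. Substituting $\sigma_{2}(A)=-\sum_{j,l}X_{j}Y_{l}\xi_{j}\xi_{l}$ from Lemma~\ref{lem3} and $\sigma_{-2}(B)=|\xi|^{-2}$ from Lemma~\ref{lem4} gives the first identity at once. For $\sigma_{-1}$ I would collect every term of total degree $-1$. The $|\alpha|=0$ part yields $\sigma_{2}(A)\sigma_{-3}(B)$ and $\sigma_{1}(A)\sigma_{-2}(B)$, together with a formal $\sigma_{0}(A)\sigma_{-1}(B)$ which vanishes because $D_{F}^{-2}$ has order $-2$ and hence carries no degree $-1$ symbol, i.e. $\sigma_{-1}(D_{F}^{-2})=0$. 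The $|\alpha|=1$ part contributes $\sum_{j}\partial_{\xi_{j}}\sigma_{2}(A)\,D_{x_{j}}\sigma_{-2}(B)$, since $\partial_{\xi_{j}}\sigma_{2}(A)$ has degree $1$ and $D_{x_{j}}\sigma_{-2}(B)$ has degree $-2$. Assembling these three surviving terms reproduces the second identity verbatim.

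The argument is essentially bookkeeping, so the only point that needs genuine care is the homogeneity accounting that guarantees exactly the three stated terms survive. Concretely, I would verify that every $|\alpha|=1$ contribution built from $\sigma_{1}(A)$ or $\sigma_{0}(A)$ has degree at most $-2$, and likewise that every $|\alpha|\ge 2$ contribution has degree at most $-2$, so that none of them reach degree $-1$; and I would record explicitly that the $|\alpha|=0$, $\sigma_{0}(A)\sigma_{-1}(B)$ term drops out by the order of $D_{F}^{-2}$. Once this accounting is in place, both identities follow by direct substitution of the symbols from Lemma~\ref{lem3} and Lemma~\ref{lem4}, with no further computation required.
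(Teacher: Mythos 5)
Your proposal is correct and follows the same route the paper (implicitly) takes: the lemma is exactly the standard symbol composition formula $\sigma(A\circ B)\sim\sum_{\alpha}\frac{1}{\alpha!}\partial_{\xi}^{\alpha}\sigma(A)D_{x}^{\alpha}\sigma(B)$ sorted by homogeneity, with the inputs taken from Lemmas 3.3 and 3.4, and the paper itself invokes this same composition formula explicitly for the analogous computation in Section 4. Your homogeneity accounting, including the observation that $\sigma_{-1}(D_{F}^{-2})=0$ and that all $|\alpha|\ge 1$ terms other than $\sum_{j}\partial_{\xi_{j}}\sigma_{2}(A)D_{x_{j}}\sigma_{-2}(B)$ drop below degree $-1$, is exactly the justification needed.
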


Now we  need to compute $\int_{\partial M} \Phi$. When $n=p+q=4$, then ${\rm tr}_{S(F)\otimes(F^\bot,*)}[{\rm \texttt{id}}]=8$, the sum is taken over $
r+l-k-j-|\alpha|=-3,~~r\leq 0,~~l\leq-2,$ then we have the following five cases:

 {\bf case a)~I)}~$r=0,~l=-2,~k=j=0,~|\alpha|=1$.

 By (3.4), we get
\begin{equation}
\label{b24}
\Phi_1=-\int_{|\xi'|=1}\int^{+\infty}_{-\infty}\sum_{|\alpha|=1}
\mathrm{Tr}[\partial^\alpha_{\xi'}\pi^+_{\xi_n}\sigma_{0}(\widetilde{\nabla}^{F}_{X}\widetilde{\nabla}^{F}_{Y}D^{-2}_{F})\times
 \partial^\alpha_{x'}\partial_{\xi_n}\sigma_{-2}(D^{-2}_{F})](x_0)d\xi_n\sigma(\xi')dx'.
\end{equation}
By Lemma 2.2 in \cite{Wa3}, for $i<n$, then
\begin{equation}
\label{b25}
\partial_{x_i}\sigma_{-2}(D^{-2}_{F})(x_0)=
\partial_{x_i}(|\xi|^{-2})(x_0)=
-\frac{\partial_{x_i}(|\xi|^{2})(x_0)}{|\xi|^4}=0,
\end{equation}
 so $\Phi_1=0$.

 {\bf case a)~II)}~$r=0,~l=-2,~k=|\alpha|=0,~j=1$.

 By (3.4), we get
\begin{equation}
\label{b26}
\Phi_2=-\frac{1}{2}\int_{|\xi'|=1}\int^{+\infty}_{-\infty}
\mathrm{Tr}[\partial_{x_n}\pi^+_{\xi_n}\sigma_{0}(\widetilde{\nabla}^{F}_{X}\widetilde{\nabla}^{F}_{Y}D_{F}^{-2})\times
\partial_{\xi_n}^2\sigma_{-2}(D^{-2}_{F})](x_0)d\xi_n\sigma(\xi')dx'.
\end{equation}
By Lemma 3.3, we have
\begin{eqnarray}\label{b237}
\partial_{\xi_n}^2\sigma_{-2}(D^{-2}_{F})(x_0)=\partial_{\xi_n}^2(|\xi|^{-2})(x_0)=\frac{6\xi_n^2-2}{(1+\xi_n^2)^3}.
\end{eqnarray}
It follows that
\begin{eqnarray}\label{b27}
\partial_{x_n}\sigma_{0}(\widetilde{\nabla}^{F}_{X}\widetilde{\nabla}^{F}_{Y}D^{-2}_{F})(x_0)
=\partial_{x_n}(-\sum_{j,l=1}^nX_jY_l\xi_j\xi_l|\xi|^{-2})=\frac{\sum_{j,l=1}^nX_jY_l\xi_j\xi_lh'(0)|\xi'|^2}{(1+\xi_n^2)^2}.
\end{eqnarray}
By integrating formula, we obtain
\begin{align}\label{b28}
\pi^+_{\xi_n}\partial_{x_n}\sigma_{0}(\widetilde{\nabla}^{F}_{X}\widetilde{\nabla}^{F}_{Y}D^{-2}_{F})(x_0)
&=\partial_{x_n}\pi^+_{\xi_n}\sigma_{0}(\widetilde{\nabla}^{F}_{X}\widetilde{\nabla}^{F}_{Y}(D^{-2}_{F})\nonumber\\
&=-\frac{i\xi_n+2}{4(\xi_n-i)^2}\sum_{j,l=1}^{n-1}X_jY_l\xi_j\xi_lh'(0)-\frac{i\xi_n}{4(\xi_n-i)^2}X_nY_nh'(0)\nonumber\\
&-\frac{i}{4(\xi_n-i)^2}\sum_{j=1}^{n-1}X_jY_n\xi_j-\frac{i}{4(\xi_n-i)^2}\Sigma_{l=1}^{n-1}X_nY_l\xi_l.
\end{align}
We note that $i<n,~\int_{|\xi'|=1}\xi_{i_{1}}\xi_{i_{2}}\cdots\xi_{i_{2d+1}}\sigma(\xi')=0$,
so we omit some items that have no contribution for computing {\bf case a)~II)}.
From (3.32) and (3.34), we obtain
\begin{align}\label{33}
&\mathrm{Tr} [\partial_{x_n}\pi^+_{\xi_n}\sigma_{0}(\widetilde{\nabla}^{F}_{X}\widetilde{\nabla}^{F}_{Y}D_{F}^{-2})\times
\partial_{\xi_n}^2\sigma_{-2}(D_{F}^{-2})](x_0)\nonumber\\
&=4[\frac{i-3i\xi_n^2}{(\xi_n-i)^4(\xi_n+i)^3}+\frac{1-3\xi_n^2}{(\xi_n-i)^5(\xi_n+i)^3}]\sum_{j,l=1}^{n-1}X_jY_l\xi_j\xi_lh'(0)\nonumber\\
&+4[\frac{i-3i\xi_n^2}{(\xi_n-i)^4(\xi_n+i)^3}-\frac{1-3\xi_n^2}{(\xi_n-i)^5(\xi_n+i)^3}]X_nY_nh'(0)\nonumber\\
&+4\frac{(1-3\xi_n^2)i}{(\xi_n-i)^5(\xi_n+i)^3}\sum_{j=1}^{n-1}X_jY_n\xi_j
+4\frac{(1-3\xi_n^2)i}{(\xi_n-i)^5(\xi_n+i)^3}\sum_{l=1}^{n-1}X_nY_l\xi_l.
\end{align}
Therefore, we get
\begin{align}\label{35}
\Phi_2
&=\left(\frac{5}{24}\sum_{j=1}^{n-1}X_jY_j-\frac{1}{8}X_nY_n\right)h'(0)\pi\Omega_3dx',
\end{align}
where ${\rm \Omega_{3}}$ is the canonical volume of $S^{2}.$

  {\bf case a)~III)}~$r=0,~l=-2,~j=|\alpha|=0,~k=1$.

By (3.4), we get
\begin{align}\label{36}
\Phi_3&=-\frac{1}{2}\int_{|\xi'|=1}\int^{+\infty}_{-\infty}
\mathrm{Tr} [\partial_{\xi_n}\pi^+_{\xi_n}\sigma_{0}(\widetilde{\nabla}^{F}_{X}\widetilde{\nabla}^{F}_{Y}D_{F}^{-2})\times
\partial_{\xi_n}\partial_{x_n}\sigma_{-2}(D_{F}^{-2})](x_0)d\xi_n\sigma(\xi')dx'\nonumber\\
&=\frac{1}{2}\int_{|\xi'|=1}\int^{+\infty}_{-\infty}
\mathrm{Tr}[\partial_{\xi_n}^2\pi^+_{\xi_n}\sigma_{0}(\widetilde{\nabla}^{F}_{X}\widetilde{\nabla}^{F}_{Y}D_{F}^{-2})\times
\partial_{x_n}\sigma_{-2}(D_{F}^{-2})](x_0)d\xi_n\sigma(\xi')dx'.
\end{align}
 By Lemma 4.4, we have
\begin{eqnarray}\label{37}
\partial_{x_n}\sigma_{-2}(D_{F}^{-2})(x_0)|_{|\xi'|=1}
=-\frac{h'(0)}{(1+\xi_n^2)^2}.
\end{eqnarray}
An easy calculation gives
\begin{align}\label{38}
\pi^+_{\xi_n}\sigma_{0}(\widetilde{\nabla}^{F}_{X}\widetilde{\nabla}^{F}_{Y}(D^{-2}_{F})(x_0)|_{|\xi'|=1}&
=\frac{i}{2(\xi_n-i)}\sum_{j,l=1}^{n-1}X_jY_l\xi_j\xi_l-\frac{i}{2(\xi_n-i)}X_nY_n\nonumber\\
&-\frac{1}{2(\xi_n-i)}\sum_{j=1}^{n-1}X_jY_n\xi_j-\frac{1}{2(\xi_n-i)}\sum_{l=1}^{n-1}X_nY_l\xi_l.
\end{align}
Also, straightforward computations yield
\begin{align}\label{mmmmm}
\partial_{\xi_n}^2\pi^+_{\xi_n}\sigma_{0}(\widetilde{\nabla}^{F}_{X}\widetilde{\nabla}^{F}_{Y}(D^{-2}_{F})(x_0)|_{|\xi'|=1}
&=\frac{i}{(\xi_n-i)^3}\sum_{j,l=1}^{n-1}X_jY_l\xi_j\xi_l-\frac{1}{(\xi_n-i)^3}X_nY_n\nonumber\\
&-\frac{1}{(\xi_n-i)^3}\sum_{j=1}^{n-1}X_jY_n\xi_j-\frac{1}{(\xi_n-i)^3}\sum_{l=1}^{n-1}X_nY_l\xi_l.
\end{align}

Therefore, we get
\begin{align}\label{41}
\Phi_3&=\frac{1}{2}\int_{|\xi'|=1}\int^{+\infty}_{-\infty}
\mathrm{Tr}[\partial^2_{\xi_n}\pi^+_{\xi_n}\sigma_{0}(\widetilde{\nabla}^{F}_{X}\widetilde{\nabla}^{F}_{Y}(D^{-2}_{F})\times
\partial_{x_n}\sigma_{-2}(D^{-2}_{F})](x_0)d\xi_n\sigma(\xi')dx'\nonumber\\
&=\left(-\frac{5}{24}\sum_{j=1}^{n-1}X_jY_j+\frac{5}{8}X_nY_n\right)h'(0)\pi\Omega_3dx'.
\end{align}

 {\bf case b)}~$r=0,~l=-3,~k=j=|\alpha|=0$.

 By (3.4), we get
\begin{align}\label{42}
\Phi_4&=-i\int_{|\xi'|=1}\int^{+\infty}_{-\infty}\mathrm{Tr} [\pi^+_{\xi_n}
\sigma_{0}(\widetilde{\nabla}^{F}_{X}\widetilde{\nabla}^{F}_{Y}D^{-2}_{F})\times
\partial_{\xi_n}\sigma_{-3}(D^{-2}_{F})](x_0)d\xi_n\sigma(\xi')dx'\nonumber\\
&=i\int_{|\xi'|=1}\int^{+\infty}_{-\infty}\mathrm{Tr} [\partial_{\xi_n}\pi^+_{\xi_n}
\sigma_{0}(\widetilde{\nabla}^{F}_{X}\widetilde{\nabla}^{F}_{Y}D^{-2}_{F})\times
\sigma_{-3}(D^{-2}_{F})](x_0)d\xi_n\sigma(\xi')dx'.
\end{align}
 By Lemma 3.4, we have
\begin{align}\label{43}
&\sigma_{-3}\left(D_{F}^{-2}\right)(x_0)|_{|\xi'|=1}= \frac{-2 i h^{\prime}(0) \xi_{n}}{\left(1+\xi_{n}^{2}\right)^{3}}-\frac{i}{\left(1+\xi_{n}^{2}\right)^{2}}\left(\frac{3}{2}h^{\prime}(0) \xi_{n}+\frac{1}{2} \sum_{k, l} \omega_{k, l}\left(\partial^{k}\right)(x_0) c\left(f_{k}\right) c\left(f_{l}\right) \otimes \operatorname{Id}_{\wedge\left(F^{\perp, \star}\right)}\right. \nonumber\\
& \left.-\operatorname{Id}_{S(F)} \otimes \frac{1}{2} \sum_{r, t} \omega_{r, t}\left(\partial^{k}\right)(x_0)\left[\hat{c}\left(h_{r}\right) \hat{c}\left(h_{t}\right)-c\left(h_{r}\right) c\left(h_{t}\right)\right]-\sum_{i, j=1}^{p} \sum_{s=1}^{q}\left\langle\nabla_{\partial^{k}}^{T M} f_{j}, h_{s}\right\rangle c\left(f_{j}\right) c\left(h_{s}\right)\right) .
\end{align}

\begin{align}\label{45}
\partial_{\xi_n}\pi^+_{\xi_n}\sigma_{0}(\widetilde{\nabla}^{F}_{X}\widetilde{\nabla}^{F}_{Y}D^{-2}_{F})(x_0)|_{|\xi'|=1}
&=-\frac{i}{2(\xi_n-i)^2}\sum_{j,l=1}^{n-1}X_jY_l\xi_j\xi_l+\frac{i}{2(\xi_n-i)^2}X_nY_n\nonumber\\
&+\frac{1}{2(\xi_n-i)^2}\sum_{j=1}^{n-1}X_jY_n\xi_j+\frac{1}{2(\xi_n-i)^2}\sum_{l=1}^{n-1}X_nY_l\xi_l.
\end{align}
We note that $i<n,~\int_{|\xi'|=1}\xi_{i_{1}}\xi_{i_{2}}\cdots\xi_{i_{2d+1}}\sigma(\xi')=0$,
so we omit some items that have no contribution for computing {\bf case b)}.
By the trace identity  $\operatorname{tr}(A B)=\operatorname{tr}(B A), \operatorname{tr}(A \otimes B)=\operatorname{tr}(A) \cdot \operatorname{tr}(B) $ and the relation of the Clifford action, we have

\begin{align}
	&\mathrm{Tr}\left[c\left(f_{k}\right) c\left(f_{l}\right)\right]=-\delta_{k}^{l} 2^{\frac{p}{2}}, \\
    &\mathrm{Tr}\left[\hat{c}\left(h_{r}\right) \hat{c}\left(h_{t}\right)-c\left(h_{r}\right) c\left(h_{t}\right)\right]=2 \delta_{r}^{t} 2^{q} .
\end{align}

Then, we have
\begin{align}\label{39}
&\mathrm{Tr}[\partial_{\xi_n}\pi^+_{\xi_n}\sigma_{0}(\widetilde{\nabla}^{F}_{X}\widetilde{\nabla}^{F}_{Y}D^{-2}_{F})\times
\sigma_{-3}D^{-2}_{F})](x_0)\nonumber\\
&\sim[-\frac{8\xi_n}{(\xi_n-i)^5(\xi_n+i)^3}-\frac{6\xi_n}{(\xi_n-i)^4(\xi_n+i)^2}]h'(0)\sum_{j,l=1}^{n-1}X_jY_l\xi_j\xi_l\nonumber\\
&~~+[\frac{8\xi_n}{(\xi_n-i)^5(\xi_n+i)^3}+\frac{6\xi_n}{(\xi_n-i)^4(\xi_n+i)^2}]h'(0)X_nY_n.
\end{align}
Therefore, we get
\begin{align}\label{41}
&\Phi_4=i\int_{|\xi'|=1}\int^{+\infty}_{-\infty}
\mathrm{Tr}[\partial_{\xi_n}\pi^+_{\xi_n}\sigma_{0}(\widetilde{\nabla}_{X}^{F}\widetilde{\nabla}_{Y}^{F}D_{F}^{-2})\times
\sigma_{-3}D_{F}^{-2})](x_0)d\xi_n\sigma(\xi')dx'\nonumber\\
&=\left(\frac{11}{24}\sum_{j=1}^{n-1}X_jY_j-\frac{11}{8}X_nY_n\right)h'(0)\pi\Omega_3dx'.
\end{align}

 {\bf  case c)}~$r=-1,~\ell=-2,~k=j=|\alpha|=0$.

By (3.4), we get
\begin{align}\label{61}
\Phi_5=-i\int_{|\xi'|=1}\int^{+\infty}_{-\infty}\mathrm{Tr} [\pi^+_{\xi_n}
\sigma_{-1}(\widetilde{\nabla}_{X}^{F}\widetilde{\nabla}_{Y}^{F}D_{F}^{-2})\times
\partial_{\xi_n}\sigma_{-2}(D_{F}^{-2})](x_0)d\xi_n\sigma(\xi')dx'.
\end{align}
By Lemma 4.4, we have
\begin{align}\label{62}
\partial_{\xi_n}\sigma_{-2}(D_{F}^{-2})(x_0)|_{|\xi'|=1}=-\frac{2\xi_n}{(\xi_n^2+1)^2}.
\end{align}
Since
\begin{align}
\sigma_{-1}(\widetilde{\nabla}_{X}^{F}\widetilde{\nabla}_{Y}^{F}D_{F}^{-2}))(x_0)|_{|\xi'|=1}
=&\sigma_{2}(\widetilde{\nabla}_{X}^{F}\widetilde{\nabla}_{Y}^{F})\sigma_{-3}(D_{F}^{-2})
+\sigma_{1}(\widetilde{\nabla}_{X}^{F}\widetilde{\nabla}_{Y}^{F})\sigma_{-2}(D_{F}^{-2})\nonumber\\
&+\sum_{j=1}^{n}\partial_{\xi_{j}}\big[\sigma_{2}(\widetilde{\nabla}_{X}^{F}\widetilde{\nabla}_{Y}^{F})\big]
D_{x_{j}}\big[\sigma_{-2}(D_{F}^{-2})\big].
\end{align}
 Explicit representation the first item of (3.41),
\begin{align}
&\sigma_{2}(\widetilde{\nabla}_{X}^{F}\widetilde{\nabla}_{Y}^{F})\sigma_{-3}(D_{F}^{-2})(x_0)|_{|\xi'|=1}\nonumber\\
=&-\sum_{j,l=1}^{n}X_jY_l\xi_j\xi_l\times \bigg \{\frac{-2 i h^{\prime}(0) \xi_{n}}{\left(1+\xi_{n}^{2}\right)^{3}}-\frac{i}{\left(1+\xi_{n}^{2}\right)^{2}}\left(\frac{3}{2}h^{\prime}(0) \xi_{n}+\frac{1}{2} \sum_{k, l} \omega_{k, l}\left(\partial^{k}\right)(x_0) c\left(f_{k}\right) c\left(f_{l}\right) \otimes \operatorname{Id}_{\wedge\left(F^{\perp, \star}\right)}\right. \nonumber\\
& \left.-\operatorname{Id}_{S(F)} \otimes \frac{1}{2} \sum_{r, t} \omega_{r, t}\left(\partial^{k}\right)(x_0)\left[\hat{c}\left(h_{r}\right) \hat{c}\left(h_{t}\right)-c\left(h_{r}\right) c\left(h_{t}\right)\right]-\sum_{i, j=1}^{p} \sum_{s=1}^{q}\left\langle\nabla_{\partial^{k}}^{T M} f_{j}, h_{s}\right\rangle c\left(f_{j}\right) c\left(h_{s}\right)\right) \bigg \}.
\end{align}
Explicit representation the second item of (3.41),
\begin{align}
&\sigma_{1}(\widetilde{\nabla}_{X}^{F}\widetilde{\nabla}_{Y}^{F})\sigma_{-2}(D_{F}^{-2})(x_0)|_{|\xi'|=1}\nonumber\\
=&\Big(\sqrt{-1}\sum_{j,l=1}^nX_j\frac{\partial_{Y_l}}{\partial_{x_j}}\xi_l
+\sqrt{-1}\sum_j[M(Y)+N(Y)+A(Y)]X_j\xi_j\nonumber\\
+&\sqrt{-1}\sum_l[M(X)+N(X)+A(X)]Y_l\xi_l\Big)\times|\xi|^{-2}.
\end{align}
Explicit representation the third item of (3.41),
\begin{align}
&\sum_{j=1}^{n}\partial_{\xi_{j}}\big[\sigma_{2}(\widetilde{\nabla}_{X}^{F}\widetilde{\nabla}_{Y}^{F})\big]
D_{x_{j}}\big[\sigma_{-2}(D_{F}^{-2})\big](x_0)|_{|\xi'|=1}\nonumber\\
=&\sum_{j=1}^{n}\partial_{\xi_{j}}\big[-\sum_{j,l=1}^nX_jY_l\xi_j\xi_l\big]
(-\sqrt{-1})\partial_{x_{j}}\big(|\xi|^{-2}\big])\nonumber\\
=&\sum_{j=1}^{n}\sum_{l=1}^{n}\sqrt{-1}(x_{j}Y_l+x_{l}Y_j)\xi_{l}\partial_{x_{j}}(|\xi|^{-2}).
\end{align}
We note that $i<n,~\int_{|\xi'|=1}\xi_{i_{1}}\xi_{i_{2}}\cdots\xi_{i_{2d+1}}\sigma(\xi')=0$,
so we omit some items that have no contribution for computing {\bf case c)}.
Also, straightforward computations yield
\begin{align}\label{71}
&\mathrm{Tr} [\pi^+_{\xi_n}
\sigma_{-1}(\widetilde{\nabla}_{X}^{F}\widetilde{\nabla}_{Y}^{F}D_{F}^{-2})\times
\partial_{\xi_n}\sigma_{-2}D_{F}^{-2})](x_0)|_{|\xi'|=1}\nonumber\\
=&\sum_{j,l=1}^{n-1}X_jY_l\xi_j\xi_l\times\frac{18i\xi_n-10\xi_n^2}{(\xi_n-i)^5(\xi_n+i)^2}h'(0)+X_nY_n\times \big[\frac{-20i\xi_n^3-36\xi_n^2+12i\xi_n}{(\xi_n-i)^5(\xi_n+i)^2}+ \frac{8\xi_n}{(\xi_n-i)^4(\xi_n+i)^2}\big]h'(0) ,
\end{align}
 
Hence in this case,
\begin{align}\label{74}
\Phi_5=&-i\int_{|\xi'|=1}\int^{+\infty}_{-\infty}\mathrm{Tr} [\pi^+_{\xi_n}
\sigma_{-1}(\widetilde{\nabla}_{X}^{F}\widetilde{\nabla}_{Y}^{F}D_{F}^{-2})\times
\partial_{\xi_n}\sigma_{-2}D_{F}^{-2})](x_0)d\xi_n\sigma(\xi')dx'\nonumber\\ =&-\left(\frac{2}{3}\sum_{j=1}^{n-1}X_jY_j+\frac{5}{8}X_nY_n\right)h'(0)\pi\Omega_3dx'.
\end{align}
Now $\Phi$ is the sum of the cases (a), (b) and (c). Let $X=X^T+X_n\partial_n,~Y=Y^T+Y_n\partial_n,$ then we have $\sum_{j=1}^{n-1}X_jY_j(x_0)=g(X^T,Y^T)(x_0).$Therefore, we get
\begin{align}\label{795}
\Phi=&\sum_{i=1}^5\Phi_i=-\left[\frac{5}{24}g(X^T,Y^T)+\frac{3}{2}X_nY_n\right]h'(0)\pi\Omega_3dx'.
\end{align}
Then we obtain following theorem
\begin{thm}\label{thmb1}
 Let $M$ be a 4-dimensional compact foliated manifold with boundary and spin leave , $\widetilde{\nabla}$ be an orthogonal
connection with torsion. Then we get the volumes  associated to $\widetilde{\nabla}_{X}^{F}\widetilde{\nabla}_{Y}^{F}D_{F}^{-2}$
and $D_{F}^{-2}$ 
\begin{align}
\label{b2773}
&\widetilde{{\rm Wres}}[\pi^+(\widetilde{\nabla}_{X}^{F}\widetilde{\nabla}_{Y}^{F}D_{F}^{-2})\circ\pi^+(D_{F}^2)]\nonumber\\
=&\frac{8\pi^2}{3}\int_{M}G(X,Y) dvol_{g}+\int_{M}sg(X, Y)dvol_{M}-\int_{\partial M}\left[\frac{5}{24}g(X^T,Y^T)+\frac{3}{2}X_nY_n\right]h'(0)\pi\Omega_3dx'.
\end{align}
where  $s$ is the scalar curvature.
\end{thm}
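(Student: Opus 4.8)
The plan is to start from the noncommutative residue formula for operators in Boutet de Monvel's algebra, which expresses $\widetilde{\mathrm{Wres}}[\pi^+(\widetilde{\nabla}_X^F\widetilde{\nabla}_Y^F D_F^{-2})\circ\pi^+(D_F^2)]$ as the sum of an interior integral over $M$ of $\mathrm{tr}[\sigma_{-n}(\widetilde{\nabla}_X^F\widetilde{\nabla}_Y^F D_F^{-2}\circ D_F^{-2})]$ and a boundary integral $\int_{\partial M}\Phi$. The interior integrand is insensitive to the boundary, so its restriction to the interior agrees with the symbol computed in the closed case; hence the first summand is delivered verbatim by Corollary 3.1 (itself the specialization of Theorem 2.4 to $n=2m=4$), giving $\frac{8\pi^2}{3}\int_M G(X,Y)\,dvol_g+\int_M sg(X,Y)\,dvol_M$. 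All the remaining work is the evaluation of $\int_{\partial M}\Phi$.

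For the boundary term I would use the explicit expression \eqref{c4} for $\Phi$, in which the summation runs over indices constrained by $r+\ell-k-j-|\alpha|=-n$ with $r\le 0$, $\ell\le -2$. Setting $n=4$ and recording $\mathrm{tr}_{S(F)\otimes\wedge(F^\bot,*)}[\mathrm{id}]=8$, the constraint becomes $r+\ell-k-j-|\alpha|=-3$, which is met by exactly five index configurations; these are the five cases a)I), a)II), a)III), b), c). The inputs for each case are fixed by Lemma 3.3 (the symbols $\sigma_0,\sigma_1,\sigma_2$ of $\widetilde{\nabla}_X^F\widetilde{\nabla}_Y^F$), Lemma 3.4 (the symbols $\sigma_{-2},\sigma_{-3}$ of $D_F^{-2}$), and Lemma 3.5 (the symbols $\sigma_0,\sigma_{-1}$ of the composition $\widetilde{\nabla}_X^F\widetilde{\nabla}_Y^F D_F^{-2}$). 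The computation in each case follows the same template: evaluate the relevant symbols at the boundary point $x_0$ with $|\xi'|=1$, apply the projection $\pi^+_{\xi_n}$, take the prescribed $\partial_{\xi_n}$ and $\partial_{x_n}$ derivatives, integrate in $\xi_n$ over the real line by residues, and finally integrate over the cosphere $|\xi'|=1$, discarding odd moments via $\int_{|\xi'|=1}\xi_{i_1}\cdots\xi_{i_{2d+1}}\sigma(\xi')=0$.

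Executing this yields $\Phi_1=0$ together with four nonzero contributions, each of the shape $\bigl(a\sum_{j=1}^{n-1}X_jY_j+bX_nY_n\bigr)h'(0)\pi\Omega_3\,dx'$. Summing the tangential coefficients gives $\frac{5}{24}-\frac{5}{24}+\frac{11}{24}-\frac{2}{3}=-\frac{5}{24}$ and the normal coefficients give $-\frac{1}{8}+\frac{5}{8}-\frac{11}{8}-\frac{5}{8}=-\frac{3}{2}$, so that after writing $\sum_{j=1}^{n-1}X_jY_j(x_0)=g(X^T,Y^T)(x_0)$ one obtains $\int_{\partial M}\Phi=-\int_{\partial M}\bigl[\frac{5}{24}g(X^T,Y^T)+\frac{3}{2}X_nY_n\bigr]h'(0)\pi\Omega_3\,dx'$, as in \eqref{795}. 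Adding this to the interior contribution produces the claimed identity.

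I expect the delicate step to be Cases b) and c), where $\sigma_{-3}(D_F^{-2})$ carries the spin-connection one-form and the torsion term; there one must reduce the fiber trace using the Clifford identities $\mathrm{Tr}[c(f_k)c(f_l)]=-\delta_k^l 2^{p/2}$ and $\mathrm{Tr}[\hat c(h_r)\hat c(h_t)-c(h_r)c(h_t)]=2\delta_r^t 2^{q}$, and the $\xi_n$-integrands develop higher-order poles at $\xi_n=\pm i$, so the residue bookkeeping is the main source of error. Once those symbols are correctly assembled and the contour integrals evaluated, the remaining steps are routine.
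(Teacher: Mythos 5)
Your proposal is correct and follows essentially the same route as the paper: the interior term is taken verbatim from Corollary 3.1, and the boundary term $\int_{\partial M}\Phi$ is evaluated by the same five-case decomposition of the index constraint $r+\ell-k-j-|\alpha|=-3$, with the individual contributions $\Phi_1=0$, $\Phi_2,\dots,\Phi_5$ and their sum $-\bigl[\frac{5}{24}g(X^T,Y^T)+\frac{3}{2}X_nY_n\bigr]h'(0)\pi\Omega_3\,dx'$ matching the paper's computation exactly.
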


By \cite{Wa3}, we have the extrinsic curvature $K=-\frac{3}{2}h'(0)$ , so when $X_n=0$ or $Y_n=0$ on boundary, we have 
\begin{cor}
	\begin{align}
		\label{b2774}
		&\widetilde{{\rm Wres}}[\pi^+(\widetilde{\nabla}_{X}^{F}\widetilde{\nabla}_{Y}^{F}D_{F}^{-2})\circ\pi^+(D_{F}^2)]\nonumber\\
		=&\frac{8\pi^2}{3}\int_{M}G(X,Y) dvol_{g}+\int_{M}sg(X, Y)dvol_{M}+\frac{5}{36}\int_{\partial M}g(X,Y)_{\partial M}Kdvol_{\partial M}.
	\end{align}
\end{cor}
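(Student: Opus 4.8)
The plan is to obtain this Corollary directly from Theorem~\ref{thmb1} by specializing the two vector fields and re-expressing the boundary contribution through the extrinsic curvature; no new symbol-level computation is required, since the genuine work has already been done in the case-by-case evaluation of $\Phi_1,\dots,\Phi_5$ that produces Theorem~\ref{thmb1}. The interior part of that formula---the Einstein term $\frac{8\pi^2}{3}\int_M G(X,Y)\,dvol_g$ and the scalar-curvature term $\int_M sg(X,Y)\,dvol_M$---is insensitive to any condition imposed only on $\partial M$, so it is carried over verbatim. The whole argument therefore concentrates on the boundary integral
\[
-\int_{\partial M}\left[\frac{5}{24}g(X^T,Y^T)+\frac{3}{2}X_nY_n\right]h'(0)\,\pi\Omega_3\,dx'.
\]

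First I would impose the hypothesis $X_n=0$ or $Y_n=0$ on $\partial M$. In either case the pointwise product $X_nY_n$ vanishes along the boundary, so the second summand in the bracket drops out. Moreover, writing $X=X^T+X_n\partial_n$ and $Y=Y^T+Y_n\partial_n$ and using $X^T,Y^T\perp\partial_n$ together with $g(\partial_n,\partial_n)=1$, one has $g(X,Y)=g(X^T,Y^T)+X_nY_n$; under the same hypothesis this gives $g(X^T,Y^T)=g(X,Y)_{\partial M}$. Hence the boundary integrand collapses to $-\frac{5}{24}g(X,Y)_{\partial M}\,h'(0)\,\pi\Omega_3$.

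Next I would convert the analytic quantity $h'(0)$ into geometric data. By the relation $K=-\frac{3}{2}h'(0)$ recorded in \cite{Wa3}, equivalently $h'(0)=-\frac{2}{3}K$, the coefficient becomes
\[
-\frac{5}{24}h'(0)=-\frac{5}{24}\left(-\frac{2}{3}K\right)=\frac{5}{36}K,
\]
and identifying (up to the standard normalization) the coordinate measure $\pi\Omega_3\,dx'$ with the intrinsic boundary volume $dvol_{\partial M}$ turns the boundary term into $\frac{5}{36}\int_{\partial M}g(X,Y)_{\partial M}\,K\,dvol_{\partial M}$. Restoring the untouched interior terms yields the claimed identity.

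The step demanding the most care is the constant-and-sign bookkeeping in the passage from $h'(0)$ to $K$: one must keep the overall minus sign in front of the boundary integral of Theorem~\ref{thmb1}, combine it correctly with the negative sign in $h'(0)=-\frac{2}{3}K$, and account for the normalization implicit in replacing $\pi\Omega_3\,dx'$ by $dvol_{\partial M}$, so that precisely the positive factor $\frac{5}{36}$ survives. Everything else is a direct restriction of Theorem~\ref{thmb1} to vector fields with vanishing normal component, and requires no further input from the Boutet de Monvel calculus.
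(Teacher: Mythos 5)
Your proposal is correct and follows essentially the same route as the paper: the paper likewise derives the corollary directly from Theorem~\ref{thmb1} by imposing $X_n=0$ or $Y_n=0$ on $\partial M$ to kill the $X_nY_n$ term and then substituting $K=-\frac{3}{2}h'(0)$, which turns $-\frac{5}{24}h'(0)$ into $\frac{5}{36}K$. The sign and constant bookkeeping you flag as the delicate step checks out exactly as you describe.
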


So we have
\begin{defn} The spectral Einstein functional for manifolds with boundary
	\begin{equation}\label{}
	E_H:=\int_{M}G(X,Y) dvol_{g}+c_0\int_{\partial M}g(X,Y)_{\partial M}Kdvol_{\partial M},
	\end{equation}
	where $c_0$ is a constant.
\end{defn}

 \section{The residue for  sub-Dirac operators  $\widetilde{\nabla}_{X}^{F}\widetilde{\nabla}_{Y}^{F}D_{F}^{-1}$ and $D_{F}^{-3}$ }

In this section, we compute the 4-dimension volume for sub-Dirac operators
 $\widetilde{\nabla}_{X}^{F}\widetilde{\nabla}_{Y}^{F}D_{F}^{-1}$ and $D_{F}^{-3}$ .
 Since $[\sigma_{-4}(\widetilde{\nabla}_{X}^{F}\widetilde{\nabla}_{Y}^{F}D_{F}^{-1}
  \circ D_{F}^{-3})]|_{M}$ has the same expression as $[\sigma_{-4}(\widetilde{\nabla}_{X}^{F}\widetilde{\nabla}_{Y}^{F}D_{F}^{-1}
  \circ D_{F}^{-3})]|_{M}$ in the case of manifolds without boundary,
so locally we can use Theorem 2.4 to compute the first term.
\begin{cor}
 Let M be a four dimensional compact manifold without boundary and $\widetilde{\nabla}$ be an orthogonal
connection with torsion. Then we get the volumes  associated to $\widetilde{\nabla}_{X}^{F}\widetilde{\nabla}_{Y}^{F}D_{F}^{-1}$
and $D_{F}^{-3}$ on compact manifolds without boundary
 \begin{align}
&Wres[\sigma_{-4}(\widetilde{\nabla}_{X}^{F}\widetilde{\nabla}_{Y}^{F}D_{F}^{-1}
  \circ D_{F}^{-3})]\nonumber\\
=&\frac{8\pi^2}{3}\int_{M}G(X,Y) dvol_{g}+\int_{M}sg(X, Y)dvol_{M},
\end{align}
where $s$ is the scalar curvature.
\end{cor}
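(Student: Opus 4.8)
The plan is to reduce the statement entirely to Theorem 2.4 by recognizing that the composed operator is, as an operator, a single negative power of $D_F$. Since $D_F$ is an elliptic (perturbed first-order) operator with invertible square, composition is associative and
\[
\widetilde{\nabla}_{X}^{F}\widetilde{\nabla}_{Y}^{F}D_{F}^{-1}\circ D_{F}^{-3}
=\widetilde{\nabla}_{X}^{F}\widetilde{\nabla}_{Y}^{F}D_{F}^{-4}
=\widetilde{\nabla}_{X}^{F}\widetilde{\nabla}_{Y}^{F}(D_{F}^{2})^{-2}.
\]
This is exactly the operator appearing in Theorem 2.4 for the value $m=2$, where $\Delta_{F}^{-m}=D_{F}^{-2m}=D_{F}^{-4}$. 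Because the noncommutative residue is extracted from the order $-n$ piece of the \emph{total} symbol of a single pseudodifferential operator, and the total symbol of a product is determined by the product operator itself, the symbol $\sigma_{-4}$ of the composition is independent of how one factors $D_{F}^{-4}$; in particular it coincides with the order $-4$ symbol used in the proof of Theorem 2.4.

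First I would note that on a closed manifold the residue formula of Section 3 carries no boundary contribution, so that
\[
Wres[\sigma_{-4}(\widetilde{\nabla}_{X}^{F}\widetilde{\nabla}_{Y}^{F}D_{F}^{-1}\circ D_{F}^{-3})]
=\int_{M}\int_{|\xi|=1}\mathrm{tr}_{S(F)\otimes\wedge(F^{\perp},*)}[\sigma_{-4}]\,\sigma(\xi)\,dx,
\]
which is precisely the interior integral already evaluated in the proof of Theorem 2.4. I would then invoke Theorem 2.4 directly with $(V,W)=(X,Y)$ and $m=2$, so that the computation of the Einstein-tensor term (via Lemma 1.2) and the evaluation $\mathrm{Tr}\,E=2^{\frac{p}{2}+q-2}s$ are inherited verbatim, together with the vanishing $F(X,Y)=0$ of the curvature-of-connection contribution.

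The remaining work is purely arithmetic: specialize the coefficients of Theorem 2.4 to the four-dimensional case $n=p+q=4$. Here $\Gamma(\tfrac{p+q}{2})=\Gamma(2)=1$ and $\mathrm{tr}_{S(F)\otimes\wedge(F^{\perp},*)}[\mathrm{Id}]=2^{\frac{p}{2}+q}=8$, so that $\tfrac{p}{2}+q=3$. The leading coefficient then becomes
\[
\frac{2^{\frac{p}{2}+q+1}\pi^{n/2}}{6\,\Gamma(\frac{p+q}{2})}
=\frac{2^{4}\pi^{2}}{6}=\frac{8\pi^{2}}{3},
\]
while the scalar-curvature coefficient becomes $2^{\frac{p}{2}+q-3}=2^{0}=1$. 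Substituting these into the formula of Theorem 2.4 yields exactly the claimed identity.

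I do not expect a genuine obstacle beyond this bookkeeping, since the substantive analytic input has already been carried out in Theorem 2.4 and Lemma 1.2. The one point requiring care is the factorization-independence of $\sigma_{-4}$ asserted in the first paragraph: one must confirm that forming the leading symbol of $D_{F}^{-1}\circ D_{F}^{-3}$ through the product rule of the symbol calculus reproduces the order $-4$ symbol of the intrinsically defined operator $D_{F}^{-4}$. Once that identification is granted, the corollary is an immediate specialization of Theorem 2.4 to $n=4$.
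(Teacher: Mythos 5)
Your proposal is correct and follows essentially the same route as the paper: the paper likewise observes that $\sigma_{-4}(\widetilde{\nabla}_{X}^{F}\widetilde{\nabla}_{Y}^{F}D_{F}^{-1}\circ D_{F}^{-3})$ agrees with the interior symbol of $\widetilde{\nabla}_{X}^{F}\widetilde{\nabla}_{Y}^{F}D_{F}^{-4}$ and then specializes Theorem 2.4 to $n=4$, $m=2$. Your coefficient check ($\Gamma(2)=1$, $2^{p/2+q}=8$, giving $\tfrac{8\pi^2}{3}$ and $1$) matches the paper's stated result.
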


Similar definition3.2 ,we have
\begin{align}\label{c4}
	\widetilde\Phi=&\int_{|\xi'|=1}\int_{-\infty}^{+\infty}\sum_{j,k=0}^{\infty}\sum \frac{(-i)^{|\alpha|+j+k+l}}{\alpha!(j+k+1)!}
	\mathrm{tr}_{S(F)\otimes\wedge(F^\bot,*)}[\partial_{x_{n}}^{j}\partial_{\xi'}^{\alpha}\partial_{\xi_{n}}^{k}\sigma_{r}^{+}
	(\nabla_{X}^{F}\nabla_{Y}^{F}(D_{F})^{-p_{1}})(x',0,\xi',\xi_{n})\nonumber\\
	&\times\partial_{x_{n}}^{\alpha}\partial_{\xi_{n}}^{j+1}\partial_{x_{n}}^{k}\sigma_{l}((D_{F}^3)^{-p_{2}})(x',0,\xi',\xi_{n})]
	\texttt{d}\xi_{n}\sigma(\xi')\texttt{d}x' ,
\end{align}

From lemma \ref{lem3} and lemma \ref{lem4}, we have
\begin{lem} The following identities hold:
\begin{align}
\sigma_{1}(\widetilde{\nabla}_{X}^{F}\widetilde{\nabla}_{Y}^{F} D_{F}^{-1})=&
-\sqrt{-1}\sum_{j,l=1}^nX_jY_l\xi_j\xi_lc(\xi)|\xi|^{-2};\\
\sigma_{0}(\widetilde{\nabla}_{X}^{F}\widetilde{\nabla}_{Y}^{F}D_{F}^{-1})=&
\sigma_{2}(\widetilde{\nabla}_{X}^{F}\widetilde{\nabla}_{Y}^{F})\sigma_{-2}(D_{F}^{-1})
+\sigma_{1}(\widetilde{\nabla}_{X}^{F}\widetilde{\nabla}_{Y}^{F})\sigma_{-1}(D_{F}^{-1})\nonumber\\
&+\sum_{j=1}^{n}\partial _{\xi_{j}}\big[\sigma_{2}(\widetilde{\nabla}_{X}^{F}\widetilde{\nabla}_{Y}^{F})\big]
D_{x_{j}}\big[\sigma_{-1}(D_{F}^{-1})\big].
\end{align}
\end{lem}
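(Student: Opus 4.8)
The plan is to apply the standard symbol-composition formula for a product of two classical pseudodifferential operators. Write $A:=\widetilde{\nabla}_{X}^{F}\widetilde{\nabla}_{Y}^{F}$ and $B:=D_{F}^{-1}$. The operator $A$ has order $2$, with homogeneous symbols $\sigma_{2}(A),\sigma_{1}(A),\sigma_{0}(A)$ recorded in Lemma \ref{lem3}; the operator $B$ has order $-1$, with homogeneous symbols $\sigma_{-1}(B)=\sqrt{-1}\,c(\xi)|\xi|^{-2},\sigma_{-2}(B),\dots$ recorded in Lemma \ref{lem4}. I would begin from the composition formula
\begin{align}
\sigma(A\circ B)\sim\sum_{\alpha}\frac{1}{\alpha!}\,\partial_{\xi}^{\alpha}\sigma(A)\,D_{x}^{\alpha}\sigma(B),\qquad D_{x}=-\sqrt{-1}\,\partial_{x},
\end{align}
so that the homogeneous symbol of order $m$ of $A\circ B$ is the sum of all products $\tfrac{1}{\alpha!}\partial_{\xi}^{\alpha}\sigma_{a}(A)\,D_{x}^{\alpha}\sigma_{b}(B)$ subject to the grading constraint $a-|\alpha|+b=m$.

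For the order-one symbol I would impose $a-|\alpha|+b=1$ with $a\le 2$ and $b\le -1$. Since $a+b=1+|\alpha|\ge 1$ while $a+b\le 2-1=1$, the only admissible choice is $|\alpha|=0$ and $(a,b)=(2,-1)$, so the single surviving contribution is $\sigma_{2}(A)\sigma_{-1}(B)$. Substituting $\sigma_{2}(A)=-\sum_{j,l=1}^{n}X_{j}Y_{l}\xi_{j}\xi_{l}$ from Lemma \ref{lem3} and $\sigma_{-1}(B)=\sqrt{-1}\,c(\xi)|\xi|^{-2}$ from Lemma \ref{lem4} gives the asserted closed form for $\sigma_{1}(\widetilde{\nabla}_{X}^{F}\widetilde{\nabla}_{Y}^{F}D_{F}^{-1})$.

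For the order-zero symbol I would impose $a-|\alpha|+b=0$, equivalently $a+b=|\alpha|$, again with $a\le 2$, $b\le -1$. The case $|\alpha|=0$ yields the two pairs $(a,b)=(2,-2)$ and $(a,b)=(1,-1)$; the case $|\alpha|=1$ yields the single pair $(a,b)=(2,-1)$; and $|\alpha|\ge 2$ is impossible since then $a+b\ge 2$ exceeds the maximal value $2+(-1)=1$. Collecting these three contributions, and noting that the $|\alpha|=1$ term carries the operator $D_{x_{j}}=-\sqrt{-1}\,\partial_{x_{j}}$ applied to $\sigma_{-1}(B)$, produces exactly
\begin{align}
\sigma_{2}(A)\sigma_{-2}(B)+\sigma_{1}(A)\sigma_{-1}(B)+\sum_{j=1}^{n}\partial_{\xi_{j}}[\sigma_{2}(A)]\,D_{x_{j}}[\sigma_{-1}(B)],
\end{align}
which is the second identity of the lemma.

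The argument is essentially a graded bookkeeping of the composition expansion, so I do not expect a genuine obstacle; the only points demanding care are confirming that no $|\alpha|\ge 1$ terms are silently dropped from the order-zero symbol and keeping the factor $\sqrt{-1}$ together with the sign convention $D_{x}=-\sqrt{-1}\,\partial_{x}$ consistent with the symbols imported from Lemmas \ref{lem3} and \ref{lem4}.
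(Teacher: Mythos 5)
Your proposal is correct and is exactly the argument the paper intends: the paper states this lemma with only the remark ``From Lemma 3.1 and Lemma 3.2'' and relies on the same composition expansion $\sigma(A\circ B)\sim\sum_{\alpha}\frac{1}{\alpha!}\partial_{\xi}^{\alpha}\sigma(A)\,D_{x}^{\alpha}\sigma(B)$ that it writes out explicitly a few lines later for $D_{F}^{3}\circ D_{F}^{-3}$. Your graded bookkeeping of the admissible pairs $(a,b,|\alpha|)$ is the right and complete justification, so nothing further is needed.
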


Write
 \begin{eqnarray}
D_x^{\alpha}&=(-i)^{|\alpha|}\partial_x^{\alpha};
~\sigma(D_{F}^3)=p_3+p_2+p_1+p_0;
~\sigma(D_{F}^{-3})=\sum^{\infty}_{j=3}q_{-j}.
\end{eqnarray}
By the composition formula of pseudodifferential operators, we have
\begin{align}
1=\sigma(D_{F}^3\circ D_{F}^{-3})
&=\sum_{\alpha}\frac{1}{\alpha!}\partial^{\alpha}_{\xi}[\sigma(D_{F}^3)
D_x^{\alpha}[\sigma(D_{F}^{-3})]\nonumber\\
&=(p_3+p_2+p_1+p_0)(q_{-3}+q_{-4}+q_{-5}+\cdots)\nonumber\\
&~~~+\sum_j(\partial_{\xi_j}p_3+\partial_{\xi_j}p_2++\partial_{\xi_j}p_1+\partial_{\xi_j}p_0)
(D_{x_j}q_{-3}+D_{x_j}q_{-4}+D_{x_j}q_{-5}+\cdots)\nonumber\\
&=p_3q_{-3}+(p_3q_{-4}+p_2q_{-3}+\sum_j\partial_{\xi_j}p_3D_{x_j}q_{-3})+\cdots,
\end{align}
so
\begin{equation}
q_{-3}=p_3^{-1};~q_{-4}=-p_3^{-1}[p_2p_3^{-1}+\sum_j\partial_{\xi_j}p_3D_{x_j}(p_-3^{-1})].
\end{equation}
Hence by Lemma 4.1 in \cite{WW} and (3.9), we have
\begin{lem} The symbol of the sub-Dirac operator:
\begin{align}
\sigma_{-3}(D_{F}^{-3})&=\sqrt{-1}c(\xi)|\xi|^{-4};\\
\sigma_{-4}(D_{F}^{-3})&=\frac{c(\xi)\sigma_2(D_{F}^{3})c(\xi)}{|\xi|^8}+\frac{\sqrt{-1}c(\xi)}{|\xi|^8}\bigg(|\xi|^4c(dx_n)\partial_{x_n}c(\xi')\nonumber\\
&-2h'(0)c(\mathrm{d}x_n)c(\xi)+2\xi_nc(\xi)\partial{x_n}c(\xi')+4\xi_nh'(0)\bigg),
\end{align}
where,
 \begin{align}
\sigma_2(D_{F}^{3})=&c(\mathrm{d}x_l)\partial_l(g^{i,j})\xi_i\xi_j+2c(\xi)\big[2\sigma^i \otimes \operatorname{Id}_{\wedge\left(F^{\perp, \star}\right)}+\operatorname{Id}_{S(F)}\otimes2\tilde{\sigma}_{i}-\Gamma^k\nonumber\\
&+2\tilde{S}(\partial i)+(M(X)+N(X))|\xi|^2\big]\xi_{k}.
\end{align}
\end{lem}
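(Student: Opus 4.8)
The plan is to obtain both symbols from the composition identity $D_F^3\circ D_F^{-3}=1$ together with the two recursive relations for $q_{-3}$ and $q_{-4}$ displayed above. First I would record the principal symbol of $D_F^3$. Since the leading symbol of $D_F$ is $\sigma_1(D_F)=\sqrt{-1}\,c(\xi)$ and the Clifford relation gives $c(\xi)^2=-|\xi|^2$, cubing yields
\begin{align}
p_3=\sigma_3(D_F^3)=\big(\sqrt{-1}\,c(\xi)\big)^3=\sqrt{-1}\,|\xi|^2c(\xi).\nonumber
\end{align}
Inverting this in the Clifford algebra by means of $c(\xi)^{-1}=-c(\xi)|\xi|^{-2}$ gives $q_{-3}=p_3^{-1}=\sqrt{-1}\,c(\xi)|\xi|^{-4}$, which is precisely the first identity of the lemma.

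For the subleading symbol I would substitute into $q_{-4}=-p_3^{-1}\big[p_2p_3^{-1}+\sum_j\partial_{\xi_j}p_3\,D_{x_j}(p_3^{-1})\big]$. The first piece simplifies at once: using $(\sqrt{-1})^2=-1$ one finds $-p_3^{-1}p_2p_3^{-1}=c(\xi)\,\sigma_2(D_F^3)\,c(\xi)\,|\xi|^{-8}$, which is the leading term in the stated expression, with $p_2=\sigma_2(D_F^3)$ read off from Lemma 4.1 of \cite{WW} and recorded in the statement.

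The remaining and most delicate contribution is the transport term $-p_3^{-1}\sum_j\partial_{\xi_j}p_3\,D_{x_j}(p_3^{-1})$. Here I would first compute $\partial_{\xi_j}p_3=\sqrt{-1}\big(2\xi_jc(\xi)+|\xi|^2c(\mathrm{d}x_j)\big)$ and then differentiate $p_3^{-1}=\sqrt{-1}c(\xi)|\xi|^{-4}$ in the base variable, recalling that $D_{x_j}=-\sqrt{-1}\,\partial_{x_j}$. Evaluating at a boundary point $x_0$ in the normal coordinates of \cite{Wa1,Wa3}, where $g^{ij}(x_0)=\delta^{ij}$ and only the normal derivative $\partial_{x_n}g^{ij}(x_0)$ survives and produces the single boundary quantity $h'(0)$, all base derivatives reduce to $\partial_{x_n}c(\xi')$ and to factors of $h'(0)$. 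Collecting these pieces and repeatedly applying $c(\xi)^2=-|\xi|^2$ to collapse the Clifford monomials should reproduce exactly the bracketed combination $|\xi|^4c(\mathrm{d}x_n)\partial_{x_n}c(\xi')-2h'(0)c(\mathrm{d}x_n)c(\xi)+2\xi_nc(\xi)\partial_{x_n}c(\xi')+4\xi_nh'(0)$, premultiplied by $\sqrt{-1}\,c(\xi)|\xi|^{-8}$.

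The main obstacle is precisely this transport term: it is a lengthy Clifford-algebraic computation in which one must track two distinct derivatives, namely $\partial_{\xi_j}$ acting on $p_3$ and $\partial_{x_n}$ acting on $p_3^{-1}$, impose the boundary simplifications $g^{ij}(x_0)=\delta^{ij}$ and $\partial_{x_n}g^{ij}(x_0)\sim h'(0)$, and apply $c(\xi)^2=-|\xi|^2$ many times to reduce the resulting Clifford products. Since the bookkeeping of signs and of the powers of $\sqrt{-1}$ is where errors most easily arise, I would organize the work by separating the part proportional to $\partial_{x_n}c(\xi')$ from the part proportional to $h'(0)$ and check each against the four terms in the statement.
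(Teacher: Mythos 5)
Your proposal follows essentially the same route as the paper: the composition identity $\sigma(D_F^3\circ D_F^{-3})=1$ gives the recursion $q_{-3}=p_3^{-1}$ and $q_{-4}=-p_3^{-1}\big[p_2p_3^{-1}+\sum_j\partial_{\xi_j}p_3D_{x_j}(p_3^{-1})\big]$, and your evaluations of $p_3=\sqrt{-1}|\xi|^2c(\xi)$, of $p_3^{-1}=\sqrt{-1}c(\xi)|\xi|^{-4}$, and of the piece $c(\xi)\sigma_2(D_F^3)c(\xi)|\xi|^{-8}$ all agree with the lemma. The transport term that you outline but do not carry out is likewise not computed explicitly in the paper, which at that point simply invokes Lemma 4.1 of \cite{WW} together with its formula for $D_F^3$.
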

Now we  need to compute $\int_{\partial M} \widetilde{\Phi}$. When $n=4$, then ${\rm tr}_{S(F)\otimes \wedge\left(F^{\perp, \star}\right)}[{\rm \texttt{id}}]=8$, the sum is taken over $
r+l-k-j-|\alpha|=-3,~~r\leq 0,~~l\leq-2,$ then we have the following five cases:

 {\bf case a)~I)}~$r=1,~l=-2,~k=j=0,~|\alpha|=1$.

By (4.2), we get
\begin{equation}
\label{b24}
\widetilde{\Phi}_1=-\int_{|\xi'|=1}\int^{+\infty}_{-\infty}\sum_{|\alpha|=1}
\mathrm{Tr}[\partial^\alpha_{\xi'}\pi^+_{\xi_n}\sigma_{1}(\widetilde{\nabla}_{X}^{F}\widetilde{\nabla}_{Y}^{F}D^{-1}_{F})\times
 \partial^\alpha_{x'}\partial_{\xi_n}\sigma_{-3}(D^{-3}_{F})](x_0)d\xi_n\sigma(\xi')dx'.
\end{equation}
By Lemma 2.2 in \cite{Wa3}, for $i<n$, then
\begin{equation}
\label{b25}
\partial_{x_i}\sigma_{-3}(D^{-3}_{F})(x_0)=
\partial_{x_i}(\sqrt{-1}c(\xi)|\xi|^{-4})(x_0)=
\sqrt{-1}\frac{\partial_{x_i}c(\xi)}{|\xi|^4}(x_0)
+\sqrt{-1}\frac{c(\xi)\partial_{x_i}(|\xi|^4)}{|\xi|^8}(x_0)
=0,
\end{equation}
\noindent so $\widetilde{\Phi}_1=0$.\\

  {\bf case a)~II)}~$r=1,~l=-3,~k=|\alpha|=0,~j=1$.

  By (4.2), we get
\begin{equation}
\label{b26}
\widetilde{\Phi}_2=-\frac{1}{2}\int_{|\xi'|=1}\int^{+\infty}_{-\infty}
\mathrm{Tr} [\partial_{x_n}\pi^+_{\xi_n}\sigma_{1}(\widetilde{\nabla}_{X}^{F}\widetilde{\nabla}_{Y}^{F}D_{F}^{-1})\times
\partial_{\xi_n}^2\sigma_{-3}(D_{F}^{-1})](x_0)d\xi_n\sigma(\xi')dx'.
\end{equation}
By Lemma 4.3, we have
\begin{eqnarray}\label{b237}
\partial_{\xi_n}^2\sigma_{-3}(D_{F}^{-3})(x_0)=\partial_{\xi_n}^2(c(\xi)|\xi|^{-4})(x_0)
=\sqrt{-1}\frac{(20\xi_n^2-4)c(\xi')+12(\xi_n^3-\xi_n)c(\mathrm{d}x_n)}{(1+\xi_n^2)^4},
\end{eqnarray}
and
\begin{align}\label{b27}
\partial_{x_n}\sigma_{1}(\widetilde{\nabla}_{X}^{F}\widetilde{\nabla}_{Y}^{F}D_{F}^{-1})(x_0)
&=\partial_{x_n}(-\sqrt{-1}\sum_{j,l=1}^nX_jY_l\xi_j\xi_lc(\xi)|\xi|^{-2})\nonumber\\
&=-\sqrt{-1}\sum_{j,l=1}^nX_jY_l\xi_j\xi_l \left[ \frac{\partial_{x_n}c(\xi')}{1+\xi_n^2}-\frac{c(\xi)h'(0)|\xi'|^2}{(1+\xi_n^2)^2}\right].
\end{align}

We note that $i<n,~\int_{|\xi'|=1}\xi_{i_{1}}\xi_{i_{2}}\cdots\xi_{i_{2d+1}}\sigma(\xi')=0$,
so we omit some items that have no contribution for computing {\bf case a)~II)}.
Then there is the following formula
\begin{align}\label{33}
&\mathrm{Tr}[\partial_{x_n}\pi^+_{\xi_n}\sigma_{1}(\widetilde{\nabla}_{X}^{F}\widetilde{\nabla}_{Y}^{F}D_{F}^{-1})\times
\partial_{\xi_n}^2\sigma_{-3}(D_{F}^{-3})](x_0)\nonumber\\
&=\sum_{j,l=1}^{n-1}X_jY_l\xi_j\xi_lh'(0)\frac{-8(3i\xi_n^2+2\xi_n-i)}{(\xi_n-i)^5(\xi_n+i)^4}\nonumber\\
&+X_nY_nh'(0)\left[\frac{-16i(5\xi_n^2-1)+48(\xi_n^3-\xi_n)}{(\xi_n-i)^5(\xi_n+i)^4}
 +\frac{8(5\xi_n^2-1)+24i(\xi_n^3-\xi_n)}{(\xi_n-i)^6(\xi_n+i)^4}\right].
\end{align}

Therefore, we get
\begin{align}\label{35}
\widetilde{\Phi}_2&=\frac{1}{2}\int_{|\xi'|=1}\int^{+\infty}_{-\infty}\bigg\{
\sum_{j,l=1}^{n-1}X_jY_l\xi_j\xi_lh'(0)\left[\frac{-8(3i\xi_n^2+2\xi_n-i)}{(\xi_n-i)^5(\xi_n+i)^4}\right]\nonumber\\
&+X_nY_nh'(0)\left[\frac{-16i(5\xi_n^2-1)+48(\xi_n^3-\xi_n)}{(\xi_n-i)^5(\xi_n+i)^4}
+\frac{8(5\xi_n^2-1)+24i(\xi_n^3-\xi_n)}{(\xi_n-i)^6(\xi_n+i)^4}\right]
 \bigg\}d\xi_n\sigma(\xi')dx'\nonumber\\
&=\left(\frac{5}{16}\sum_{j=1}^{n-1}X_jY_j
+\frac{1}{16}X_nY_n\right)h'(0)\pi\Omega_3dx',
\end{align}
where ${\rm \Omega_{3}}$ is the canonical volume of $S^{2}.$

 {\bf case a)~III)}~$r=1,~l=-3,~j=|\alpha|=0,~k=1$.

 By (4.2), we get
\begin{align}\label{36}
\widetilde{\Phi}_3&=-\frac{1}{2}\int_{|\xi'|=1}\int^{+\infty}_{-\infty}
\mathrm{Tr} [\partial_{\xi_n}\pi^+_{\xi_n}\sigma_{1}(\widetilde{\nabla}_{X}^{F}\widetilde{\nabla}_{Y}^{F}D_{F}^{-1})\times
\partial_{\xi_n}\partial_{x_n}\sigma_{-3}(D_{F}^{-3})](x_0)d\xi_n\sigma(\xi')dx'\nonumber\\
&=\frac{1}{2}\int_{|\xi'|=1}\int^{+\infty}_{-\infty}
\mathrm{Tr} [\partial_{\xi_n}^2\pi^+_{\xi_n}\sigma_{1}(\widetilde{\nabla}_{X}^{F}\widetilde{\nabla}_{Y}^{F}D_{F}^{-1})\times
\partial_{x_n}\sigma_{-3}(D_{F}^{-3})](x_0)d\xi_n\sigma(\xi')dx'.
\end{align}
\noindent By Lemma 4.3, we have
\begin{eqnarray}\label{37}
\partial_{x_n}\sigma_{-3}(D_{F}^{-3})(x_0)|_{|\xi'|=1}
=\frac{i\partial_{x_n}[c(\xi')]}{(1+\xi_n^2)^4}-\frac{2ih'(0)c(\xi)|\xi'|^2_{g^{\partial M}}}{(1+\xi_n^2)^6}.
\end{eqnarray}
And we have
\begin{align}\label{mmmmm}
\partial_{\xi_n}^2\pi^+_{\xi_n}\sigma_{1}(\widetilde{\nabla}_{X}^{F}\widetilde{\nabla}_{Y}^{F}D_{F}^{-1})
&=-\frac{c(\xi')+ic(\mathrm{d}x_n)}{(\xi_n-i)^3}\sum_{j,l=1}^{n-1}X_jY_l\xi_j\xi_l
+\frac{c(\xi')+ic(\mathrm{d}x_n)}{(\xi_n-i)^3}X_nY_n\nonumber\\
&-\frac{ic(\xi')-c(\mathrm{d}x_n)}{(\xi_n-i)^3}\sum_{j=1}^{n-1}X_jY_n\xi_j-\frac{ic(\xi')-c(\mathrm{d}x_n)}{(\xi_n-i)^3}\sum_{l=1}^{n-1}X_nY_l\xi_l.
\end{align}

We note that $i<n,~\int_{|\xi'|=1}\xi_{i_{1}}\xi_{i_{2}}\cdots\xi_{i_{2d+1}}\sigma(\xi')=0$,
so we omit some items that have no contribution for computing {\bf case a)~III)}, then
\begin{align}\label{39}
&\mathrm{Tr} [\partial_{\xi_n}\pi^+_{\xi_n}\sigma_{1}(\widetilde{\nabla}_{X}^{F}\widetilde{\nabla}_{Y}^{F}D_{F}^{-1})\times
\partial_{\xi_n}\partial_{x_n}\sigma_{-3}(D_{F}^{-3})](x_0)|_{|\xi'|=1}\nonumber\\
&=\big[\frac{4i}{(\xi_n-i)^5(\xi_n+i)^2}+\frac{16}{(\xi_n-i)^5(\xi_n+i)^3}\big]h'(0)\sum_{j,l=1}^{n-1}X_jY_l\xi_j\xi_l\nonumber\\
&~~~~-\big[\frac{4i}{(\xi_n-i)^5(\xi_n+i)^2}+\frac{16}{(\xi_n-i)^5(\xi_n+i)^3}\big]h'(0)X_nY_n.\nonumber\\
\end{align}

Therefore, we get
\begin{align}\label{41}
\widetilde{\Phi}_3&=\frac{1}{2}\int_{|\xi'|=1}\int^{+\infty}_{-\infty}
\bigg(\big[\frac{4i}{(\xi_n-i)^5(\xi_n+i)^2}+\frac{16}{(\xi_n-i)^5(\xi_n+i)^3}\big]h'(0)\sum_{j,l=1}^{n-1}X_jY_l\xi_j\xi_l\nonumber\\
&~~~~-\big[\frac{4i}{(\xi_n-i)^5(\xi_n+i)^2}+\frac{16}{(\xi_n-i)^5(\xi_n+i)^3}\big]h'(0)X_nY_n\bigg)d\xi_n\sigma(\xi')dx'\nonumber\\
&=\left(-\frac{25}{48}\sum_{j=1}^{n-1}X_jY_j+\frac{25}{16}X_nY_n\right)h'(0)\pi\Omega_3dx'.
\end{align}

 {\bf case b)}~$r=0,~l=-3,~k=j=|\alpha|=0$.

By (4.2), we get
\begin{align}\label{42}
\widetilde{\Phi}_4&=-i\int_{|\xi'|=1}\int^{+\infty}_{-\infty}
\mathrm{Tr}[\pi^+_{\xi_n}\sigma_{0}(\widetilde{\nabla}_{X}^{F}\widetilde{\nabla}_{Y}^{F}D_{F}^{-1})\times
\partial_{\xi_n}\sigma_{-3}(D_{F}^{-3})](x_0)d\xi_n\sigma(\xi')dx'.
\end{align}
By Lemma 4.3, we obtain
\begin{align}\label{43}
\partial_{\xi_n}\sigma_{-3}(D_{F}^{-3})(x_0)|_{|\xi'|=1}
=\frac{ic(\mathrm{d}x_n)}{(1+\xi_n^2)^2}-\frac{4\sqrt{-1}\xi_nc(\xi)}{(1+\xi_n^2)^3}.
\end{align}
By Lemma 4.2, we have
\begin{align}\label{b41}
\sigma_{0}(\widetilde{\nabla}_{X}^{F}\widetilde{\nabla}_{Y}^{F}D_{F}^{-1})=&
\sigma_{2}(\widetilde{\nabla}_{X}^{F}\widetilde{\nabla}_{Y}^{F})\sigma_{-2}(D_{F}^{-1})
+\sigma_{1}(\widetilde{\nabla}_{X}^{F}\widetilde{\nabla}_{Y}^{F})\sigma_{-1}(D_{F}^{-1})\nonumber\\
&+\sum_{j=1}^{n}\partial _{\xi_{j}}\big[\sigma_{2}(\widetilde{\nabla}_{X}^{F}\widetilde{\nabla}_{Y}^{F})\big]
D_{x_{j}}\big[\sigma_{-1}(D_{F}^{-1})\big]\nonumber\\
&:=A+B+C.
\end{align}

(1) Explicit representation the first item of \eqref{b41}
\begin{align}
&\sigma_{2}(\widetilde{\nabla}_{X}^{F}\widetilde{\nabla}_{Y}^{F})\sigma_{-2}(D_{F}^{-1})(x_0)|_{|\xi'|=1}
=-\sum_{j,l=1}^{n}X_jY_l\xi_j\xi_l\sigma_{-2}(D_{F}^{-1})(x_0)|_{|\xi'|=1}\nonumber\\
&=-\sum_{j,l=1}^{n-1}X_jY_l\xi_j\xi_l\sigma_{-2}(D_{F}^{-1})(x_0)|_{|\xi'|=1}
  -X_nY_n\xi_n^2\sigma_{-2}(D_{F}^{-1})(x_0)|_{|\xi'|=1}\nonumber\\
&-\sum_{j=1}^{n-1}X_jY_n\xi_j\xi_n\sigma_{-2}(D_{F}^{-1})(x_0)|_{|\xi'|=1}
-\sum_{l=1}^{n-1}X_nY_l\xi_n\xi_l\sigma_{-2}(D_{F}^{-1})(x_0)|_{|\xi'|=1},
\end{align}
we let
\begin{align}
	Q_1=\sum_{j,l=1}^{n-1}X_jY_l\xi_j\xi_l\sigma_{-2}(D_{F}^{-1})(x_0)|_{|\xi'|=1},~~
	Q_2=X_nY_n\xi_n^2\sigma_{-2}(D_{F}^{-1})(x_0)|_{|\xi'|=1}.
\end{align}
By the trace identity  $\operatorname{tr}(A B)=\operatorname{tr}(B A), \operatorname{tr}(A \otimes B)=\operatorname{tr}(A) \cdot \operatorname{tr}(B) $ and the relation of the Clifford action, we have

\begin{align}
	&\mathrm{Tr}\left[\sigma_{0}(D_{F})c(h_q)\right]=4\sum_{i=1}^{p+q}\left\langle \nabla_{e_i}^{TM}e_i, h_q\right\rangle=-4\sum_{i=1}^{n-1}\left\langle e_j, \nabla_{e_j}^{TM}\partial_{x_n}\right\rangle := -4 div_{\partial M}(\partial x_n),\\
	&\mathrm{Tr}\left[\sigma_{0}(D_{F})c(\xi')\right]=4\sum_{i=1}^{p+q}\left\langle \nabla_{e_i}^{TM}e_i, \xi_{i}\right\rangle=0.
\end{align}

Then
\begin{align}
	&\mathrm{Tr}[\pi^+_{\xi_n}Q_1\times\partial_{\xi_n}\sigma_{-3}(D_{F}^{-3})]|_{|\xi'|=1}\nonumber\\	=&-\sum_{j,l=1}^{n-1}X_jY_l\xi_j\xi_lh'(0)\left[\frac{3\xi_n^3-6i\xi_n^2-5\xi_n+2i}{(\xi_n-i)^5(\xi_n+i)^3}-\frac{3\xi_n^4-9i\xi_n^3-17\xi_n^2+15i\xi_n+4}{(\xi_n-i)^6(\xi_n+i)^3}\right
	]\nonumber\\
	&-\sum_{j,l=1}^{n-1}X_jY_l\xi_j\xi_lh'(0)\frac{3\xi_n-i}{2(\xi_n-i)^4(\xi_n+i)^3}\mathrm{Tr}\left[i\sigma_{0}(D_{F})c(\mathrm{d}x_n)+\sigma_{0}(D_{F})c(\xi')\right]\nonumber\\
	=&-2\sum_{j,l=1}^{n-1}X_jY_l\xi_j\xi_lh'(0)\frac{3\xi_n-i}{(\xi_n-i)^5(\xi_n+i)^3}+2\sum_{j,l=1}^{n-1}X_jY_l\xi_j\xi_lh'(0)div_{\partial M}(\partial x_n)\frac{3i\xi_n+1}{(\xi_n-i)^4(\xi_n+i)^3},
\end{align}
and
\begin{align}\label{b42}
	&\mathrm{Tr}[\pi^+_{\xi_n}Q_2\times \partial_{\xi_{n}}(D_{F}^{-3})]|_{|\xi'|=1}\nonumber\\
	=&-X_nY_nh'(0)\left[\frac{3\xi_n^2-5i\xi_n}{(\xi_n-i)^4(\xi_n+i)^3}+\frac{3\xi_n^4+6i\xi_n^3+11\xi_n^2-6i\xi_n}{(\xi_n-i)^6(\xi_n+i)^3}\right]\nonumber\\
	&-X_nY_nh'(0)\frac{6i\xi_n^2+5\xi_n-i}{2(\xi_n-i)^4(\xi_n+i)^3}\mathrm{Tr}\left[i\sigma_{0}(D_{F})c(\mathrm{d}x_n)+\sigma_{0}(D_{F})c(\xi')\right]\nonumber\\
	=&-X_nY_nh'(0)\frac{6\xi_n^4-5i\xi_n^3-2\xi_n^2-i\xi_n}{(\xi_n-i)^6(\xi_n+i)^3}-2X_nY_nh'(0)div_{\partial M}(\partial x_n)\frac{6\xi_n^2-5i\xi_n-1}{(\xi_n-i)^4(\xi_n+i)^3}.
\end{align}
 We have,
\begin{align}\label{39}
&-i\int_{|\xi'|=1}\int^{+\infty}_{-\infty}
\mathrm{Tr} [\pi^+_{\xi_n}(Q_1+Q_2)\times
\partial_{\xi_n}\sigma_{-3}(D_F^{-3})](x_0)d\xi_n\sigma(\xi')dx'\nonumber\\
=&-\left(\frac{5 }{16}\sum_{j=1}^{n-1}X_jY_j+\frac{5}{16}X_nY_n\right)h'(0)\pi\Omega_3dx'+\left(\frac{1}{3}\sum_{j=1}^{n-1}X_jY_j+\frac{1}{2}X_nY_n\right)div_{\partial M}(\partial x_n)h'(0)\pi\Omega_3dx'.
\end{align}

(2) Explicit representation the second item of \eqref{b41}
\begin{align}
&\sigma_{1}(\widetilde{\nabla}_{X}^{F}\widetilde{\nabla}_{Y}^{F})\sigma_{-1}(D_{F}^{-1})(x_0)|_{|\xi'|=1}\nonumber\\
=&\Big[\sqrt{-1}\sum_{j,l=1}^nX_j\frac{\partial_{Y_l}}{\partial_{x_j}}\xi_l
+\sqrt{-1}\sum_j\big(M(Y)+N(Y)+A(Y)\big)X_j\xi_j\nonumber\\
&+\sqrt{-1}\sum_l\big(M(X)+N(X)+A(X)\big)Y_l\xi_l\Big]\frac{\sqrt{-1}c(\xi)}{|\xi|^{2}};
\end{align}
We note that $i<n,~\int_{|\xi'|=1}\xi_{i_{1}}\xi_{i_{2}}\cdots\xi_{i_{2d+1}}\sigma(\xi')=0$,
then
\begin{align}\label{39}
&\mathrm{Tr} \left(\pi^+_{\xi_n}\Big(\sigma_{1}(\widetilde{\nabla}_{X}^{F}\widetilde{\nabla}_{Y}^{F})\sigma_{-1}(D_{F}^{-1})\Big)\times
\partial_{\xi_n}\sigma_{-3}(D^{-3})\right)(x_0)\nonumber\\
&=\frac{4(3\xi_n-i)}{(1+\xi_n^2)^3}\sum_{j,l=1}^nX_j\frac{\partial_{Y_l}}{\partial_{x_j}}\xi_l,
\end{align}
so we have
\begin{align}
	&-i\int_{|\xi'|=1}\int^{+\infty}_{-\infty}
	\mathrm{Tr} [\pi^+_{\xi_n}\sigma_{1}(\widetilde{\nabla}_{X}\widetilde{\nabla}_{Y})\sigma_{-1}(D^{-1})\times
	\partial_{\xi_n}\sigma_{-3}(D^{-3})](x_0)d\xi_n\sigma(\xi')dx'\nonumber\\
	&=\frac{3i}{2}X(Y_n)\Omega_3dx',
\end{align}
	where $X(Y_n)$ is $\sum_{j=1}^nX_j\frac{\partial_{Y_n}}{\partial_{x_j}}.$
	
(3) Explicit representation the third item of \eqref{b41}
\begin{align}
\sum_{j=1}^{n}\sum_{|\alpha|=1}\frac{1}{\alpha!}\partial^{|\alpha|}_{\xi}\big[\sigma_{2}(\widetilde{\nabla}_{X}^{F}\widetilde{\nabla}_{Y}^{F})\big]
D_{x_j}\big[\sigma_{-1}(D_{F}^{-1})\big](x_0)|_{|\xi'|=1}
=\sum_{j=1}^{n}\sum_{l=1}^{n}\sqrt{-1}(x_{j}Y_l+x_{l}Y_j)\xi_{l}\partial_{x_{j}}(\frac{\sqrt{-1}c(\xi)}{|\xi|^{2}}).
\end{align}
We note that $i<n,~\int_{|\xi'|=1}\xi_{i_{1}}\xi_{i_{2}}\cdots\xi_{i_{2d+1}}\sigma(\xi')=0$,
then
\begin{align}\label{39}
	&\mathrm{Tr} \left(\pi^+_{\xi_n}\Big(\sum_{j=1}^{n}\sum_{|\alpha|=1}\frac{1}{\alpha!}\partial^{|\alpha|}_{\xi}\big[\sigma_{2}(\widetilde{\nabla}_{X}^{F}\widetilde{\nabla}_{Y}^{F})\big]
	D_{x_j}\big[\sigma_{-1}(D_{F}^{-1})\big]\Big)\times
	\partial_{\xi_n}\sigma_{-3}(D^{-3})\right)(x_0)|_{|\xi'|=1}\nonumber\\
	&=4X_nY_nh'(0)\frac{3\xi_n^2-i\xi_n}{(\xi_n-i)^4(\xi_n+i)^3},
\end{align}
Substituting (4.37) into (4.23) yields
\begin{align}\label{39}
&-i\int_{|\xi'|=1}\int^{+\infty}_{-\infty}
\mathrm{Tr}\Big[\pi^+_{\xi_n}\Big(\sum_{j=1}^{n}\sum_{\alpha}\frac{1}{\alpha!}\partial^{\alpha}_{\xi}
\big[\sigma_{2}(\widetilde{\nabla}_{X}\widetilde{\nabla}_{Y})\big]
D_x^{\alpha}\big[\sigma_{-1}(D^{-1})\big]\Big)
\times
\partial_{\xi_n}\sigma_{-3}(D^{-3})\Big](x_0)d\xi_n\sigma(\xi')dx'\nonumber\\
&=\frac{1}{2}X_{n}Y_n\pi h'(0)\Omega_3dx'.
\end{align}
Summing up (1), (2) and (3) leads to the desired equality
\begin{align}\label{41}
\widetilde{\Phi}_4
=&\left(-\frac{5 }{16}\sum_{j=1}^{n-1}X_jY_j+\frac{3}{16}X_nY_n\right)h'(0)\pi\Omega_3dx'+\frac{3i}{2}X(Y_n)\Omega_3dx'\nonumber\\
&+\left(\frac{1}{3}\sum_{j=1}^{n-1}X_jY_j+\frac{1}{2}X_nY_n\right)div_{\partial M}(\partial x_n)h'(0)\pi\Omega_3dx'.
\end{align}

 {\bf  case c)}~$r=1,~\ell=-4,~k=j=|\alpha|=0$.

By (4.2), we get
\begin{align}\label{61}
\widetilde{\Phi}_5&=-\int_{|\xi'|=1}\int^{+\infty}_{-\infty}\mathrm{Tr} [\pi^+_{\xi_n}
\sigma_{1}(\widetilde{\nabla}_{X}^{F}\widetilde{\nabla}_{Y}^{F}D_{F}^{-1})\times
\partial_{\xi_n}\sigma_{-4}D_{F}^{-3})](x_0)d\xi_n\sigma(\xi')dx'\nonumber\\
&=\int_{|\xi'|=1}\int^{+\infty}_{-\infty}\mathrm{Tr}
[\partial_{\xi_n}\pi^+_{\xi_n}\sigma_{1}(\widetilde{\nabla}_{X}^{F}\widetilde{\nabla}_{Y}^{F}D_{F}^{-1})\times
\sigma_{-4}D_{F}^{-3})](x_0)d\xi_n\sigma(\xi')dx'.
\end{align}
By Lemma 4.3, we have
\begin{align}\label{62}
\sigma_{-4}(D_{F}^{-3})(x_0)|_{|\xi'|=1}=&\frac{c(\xi)\sigma_2(D_{F}^3)c(\xi)}{(\xi_n^2+1)^4}+\frac{ic(\xi)}{(\xi_n^2+1)^4}
\left[|\xi|^4c(dx_n)\partial_{x_n}c(\xi')-2h'(0)c(dx_n)c(\xi)\right.\nonumber\\
&+\left.2\xi_nc(\xi)\partial_{x_n}c(\xi')+4\xi_nh'(0)\right] ,
\end{align}
where
\begin{align}
	\sigma_{2}(D_{F}^3)=&\sum_{i,j,l}c(d_{x_l})(\partial_lg^{ij})+2c(\xi)\big[2\sigma^i \otimes \operatorname{Id}_{\wedge\left(F^{\perp, \star}\right)}+\operatorname{Id}_{S(F)} \otimes 2 \tilde{\sigma}^{k}-\Gamma^{k}+2\widetilde{S}(\partial_{i})\big]\xi_k\nonumber\\
	&+\big[M(X)+N(X)\big]|\xi|^2,
\end{align}
and
\begin{align}\label{621}
\partial_{\xi_n}\pi^+_{\xi_n}\sigma_{1}(\widetilde{\nabla}_{X}^{F}\widetilde{\nabla}_{Y}^{F}D_{F}^{-1})(x_0)|_{|\xi'|=1}
=&\frac{c(\xi')+ic(\mathrm{d}x_n)}{2(\xi_n-i)^2}\sum_{j,l=1}^{n-1}X_jY_l\xi_j\xi_l-\frac{c(\xi')+ic(\mathrm{d}x_n)}{2(\xi_n-i)^2}X_nY_n \nonumber\\
&+\frac{ic(\xi')-c(\mathrm{d}x_n)}{2(\xi_n-i)^2}\sum_{j=1}^{n-1}X_jY_n\xi_j+\frac{ic(\xi')-c(\mathrm{d}x_n)}{2(\xi_n-i)^2}\sum_{l=1}^{n-1}X_nY_l\xi_l.
\end{align}
We note that $i<n,~\int_{|\xi'|=1}\xi_{i_{1}}\xi_{i_{2}}\cdots\xi_{i_{2d+1}}\sigma(\xi')=0$,
so we omit some items that have no contribution for computing {\bf case c)}.
Also, straightforward computations yield
\begin{align}\label{71}
&{\rm tr}\bigg[\partial_{\xi_n}\pi^+_{\xi_n}\sigma_{-1}(\widetilde{\nabla}_{X}^{F}\widetilde{\nabla}_{Y}^{F}D_{F}^{-1})\times \sigma_{-4}(D_F^{-3})\bigg](x_0)|_{|\xi'|=1}\nonumber\\
=&\sum_{j,l=1}^{n-1}X_jY_l\xi_j\xi_lh'(0)\left[\frac{2i\xi_n+2}{(\xi_n-i)^4(\xi_n+i)^2}+\frac{(24+4i)\xi_n^2+(4-32i)\xi_n-8}{(\xi_n-i)^6(\xi_n+i)^4}-\frac{2\xi_n+2i\xi_n^2}{(\xi_n-i)^5(\xi_n+i)^3}\right]\nonumber\\
&-X_nY_nh'(0)\left[\frac{2i\xi_n+2}{(\xi_n-i)^4(\xi_n+i)^2}+\frac{(24+4i)\xi_n^2+(4-32i)\xi_n-8}{(\xi_n-i)^6(\xi_n+i)^4}-\frac{2\xi_n+2i\xi_n^2}{(\xi_n-i)^5(\xi_n+i)^3}\right]\nonumber\\
&+\sum_{j,l=1}^{n-1}X_jY_l\xi_j\xi_l\xi_k\xi_t g(\mathrm{d}{x^t},h_k)\left[\frac{2(i-1)}{(\xi_n-i)^5(\xi_n+i)^3}+\frac{(i-1)}{4(\xi_n-i)^4(\xi_n+i)^2}\right]h'(0)\nonumber\\
&+X_nY_n\xi_k\xi_t g(\mathrm{d}{x^t},h_k)\left[\frac{2(1-i)}{(\xi_n-i)^5(\xi_n+i)^3}+\frac{(1-i)}{4(\xi_n-i)^4(\xi_n+i)^2}\right]h'(0).
\end{align}
We get
\begin{align}\label{74}
\widetilde{\Phi}_5
=&\left[\frac{129-44i }{320}\sum_{j=1}^{n-1}X_jY_j-\frac{245-26i}{96}X_nY_n\right]h'(0)\pi\Omega_3dx' .\nonumber\\
\end{align}

Let $X=X^T+X_n\partial_n,~Y=Y^T+Y_n\partial_n,$ then we have $\sum_{j=1}^{n-1}X_jY_j(x_0)=g(X^T,Y^T)(x_0).$
 Now $\Phi$ is the sum of the cases (a), (b) and (c). Combining with the five cases, this yields
\begin{align}\label{795}
\widetilde{\Phi}=\sum_{i=1}^5\widetilde{\Phi}_i
=&-\left[\frac{113+132i }{960}g(X^T,Y^T)+\frac{71-26i}{96}X_nY_n\right]h'(0)\pi\Omega_3dx'+\frac{3i}{2}X(Y_n)\Omega_3dx'\nonumber\\
&+\left[\frac{1}{3}g(X^T,Y^T)+\frac{1}{2}X_nY_n\right]div_{\partial M}(\partial x_n)h'(0)\pi\Omega_3dx'.
\end{align}
So, we are reduced to prove the following.
\begin{thm}\label{thmb1}
Let $M$ be a 4-dimensional compact filiated manifold with boundary and spin leave,  $\widetilde{\nabla}$ be an orthogonal
connection with torsion. Then we get the spectral Einstein functional associated to $\widetilde{\nabla}_{X}^{F}\widetilde{\nabla}_{Y}^{F}D_{F}^{-1}$
and $D_{F}^{-3}$ on $M$
\begin{align}
\label{b2773}
&\widetilde{{\rm Wres}}[\pi^+(\widetilde{\nabla}_{X}^{F}\widetilde{\nabla}_{Y}^{F}D_{F}^{-1})
\circ\pi^+(D_{F}^{-3})]\nonumber\\
&=\frac{8\pi^2}{3}\int_{M}G(X,Y) dvol_{g}+\int_{M}sg(X, Y)dvol_{M}\nonumber\\
&-\int_{\partial M}\left[\frac{113+132i }{960}g(X^T,Y^T)+\frac{71-26i}{96}X_nY_n\right]h'(0)\pi\Omega_3dx'+\int_{\partial M}\frac{3i}{2}X(Y_n)\Omega_3dx'\nonumber\\
&+\int_{\partial M}\left[\frac{1}{3}g(X^T,Y^T)+\frac{1}{2}X_nY_n\right]div_{\partial M}(\partial x_n)h'(0)\pi\Omega_3dx'.
\end{align}
where $s$ is the scalar curvature.
\end{thm}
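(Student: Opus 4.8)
The plan is to split the twisted noncommutative residue into an interior piece and a boundary piece, exactly as in the Boutet de Monvel calculus used in Section~3. Writing
\[
\widetilde{{\rm Wres}}[\pi^+(\widetilde{\nabla}_{X}^{F}\widetilde{\nabla}_{Y}^{F}D_{F}^{-1})\circ\pi^+(D_{F}^{-3})]
=\int_{M}\int_{|\xi|=1}\mathrm{tr}[\sigma_{-4}(\widetilde{\nabla}_{X}^{F}\widetilde{\nabla}_{Y}^{F}D_{F}^{-1}\circ D_{F}^{-3})]\sigma(\xi)\,dx+\int_{\partial M}\widetilde{\Phi},
\]
I would handle the two terms separately. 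For the interior term I note that the restriction $[\sigma_{-4}(\widetilde{\nabla}_{X}^{F}\widetilde{\nabla}_{Y}^{F}D_{F}^{-1}\circ D_{F}^{-3})]|_{M}$ coincides pointwise with the corresponding symbol on a closed manifold, so Corollary~4.1 (which is itself read off from Theorem~2.4) applies verbatim and delivers $\frac{8\pi^{2}}{3}\int_{M}G(X,Y)\,dvol_{g}+\int_{M}sg(X,Y)\,dvol_{M}$. Thus the whole problem reduces to evaluating $\int_{\partial M}\widetilde{\Phi}$.

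For the boundary term I would work in boundary normal coordinates centred at a point $x_{0}\in\partial M$ and feed into the master formula for $\widetilde{\Phi}$ the symbol expansions already assembled: the order $1$ and order $0$ parts of $\widetilde{\nabla}_{X}^{F}\widetilde{\nabla}_{Y}^{F}D_{F}^{-1}$ from Lemma~4.2, and the symbols $\sigma_{-3}(D_{F}^{-3})$, $\sigma_{-4}(D_{F}^{-3})$ from Lemma~4.3. The degree bookkeeping $r+\ell-k-j-|\alpha|=-3$ with $r\le 0,\ \ell\le -2$ singles out precisely the five index patterns labelled \textbf{a)\,I)}, \textbf{a)\,II)}, \textbf{a)\,III)}, \textbf{b)}, \textbf{c)}. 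For each pattern the routine is the same: apply $\pi^{+}_{\xi_{n}}$ and the prescribed $x_{n}$- and $\xi_{n}$-derivatives to the two symbols at $x_{0}$ with $|\xi'|=1$; take the fibrewise trace using $\mathrm{tr}(A\otimes B)=\mathrm{tr}A\cdot\mathrm{tr}B$ together with the Clifford identities $\mathrm{Tr}[c(f_{k})c(f_{l})]=-\delta_{k}^{l}2^{p/2}$ and $\mathrm{Tr}[\hat{c}(h_{r})\hat{c}(h_{t})-c(h_{r})c(h_{t})]=2\delta_{r}^{t}2^{q}$; discard every monomial carrying an odd number of tangential components, since $\int_{|\xi'|=1}\xi_{i_{1}}\cdots\xi_{i_{2d+1}}\sigma(\xi')=0$; evaluate the resulting $\xi_{n}$-integral over $\mathbb{R}$ by the residue theorem, closing the contour in the upper half-plane and collecting the residue at $\xi_{n}=i$; and finally integrate the surviving even powers of $\xi'$ over $S^{2}$, which converts $\sum_{j}X_{j}Y_{j}$ into $g(X^{T},Y^{T})$ and produces the factor $\Omega_{3}$. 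Summing the five outputs $\widetilde{\Phi}_{1},\dots,\widetilde{\Phi}_{5}$ yields the boundary integrand $\widetilde{\Phi}$, and adding the interior term gives the stated identity.

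The principal obstacle is \textbf{case c)}, i.e.\ the evaluation of $\widetilde{\Phi}_{5}$, because it requires the order $-4$ symbol $\sigma_{-4}(D_{F}^{-3})$, inside which the second-order symbol $\sigma_{2}(D_{F}^{3})$ is sandwiched as $c(\xi)\,\sigma_{2}(D_{F}^{3})\,c(\xi)$; the resulting Clifford words and the high-order rational integrands, with denominators as large as $(\xi_{n}-i)^{6}(\xi_{n}+i)^{4}$, make this the most delicate residue computation. A secondary point requiring care is \textbf{case b)}: here the traces $\mathrm{Tr}[\sigma_{0}(D_{F})c(\mathrm{d}x_{n})]$ and $\mathrm{Tr}[\sigma_{0}(D_{F})c(\xi')]$ do not both vanish, and it is exactly their nonzero part that produces the tangential divergence term $div_{\partial M}(\partial x_{n})$ and the first-derivative term $\frac{3i}{2}X(Y_{n})$; these must be tracked rather than dropped, and they account for the last two boundary integrals in the theorem.
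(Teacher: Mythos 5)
Your proposal follows essentially the same route as the paper: the same interior/boundary splitting, the same reduction of the interior term to Corollary~4.1 (hence Theorem~2.4), and the same five-case residue evaluation of $\widetilde{\Phi}$ using Lemmas~4.2 and~4.3, the Clifford trace identities, and the vanishing of odd spherical integrals, including the correct identification of case~c) as the delicate step and of case~b) as the source of the $div_{\partial M}(\partial x_{n})$ and $\tfrac{3i}{2}X(Y_{n})$ terms. The only quibble is the degree bookkeeping: for the pair $\widetilde{\nabla}_{X}^{F}\widetilde{\nabla}_{Y}^{F}D_{F}^{-1}$ and $D_{F}^{-3}$ the relevant constraints are $r\le 1$ and $\ell\le -3$ (so the five patterns are $(r,\ell)=(1,-3)$ with one derivative, $(0,-3)$, and $(1,-4)$), not $r\le 0,\ \ell\le -2$ as in Section~3 — a slip the paper itself also carries over.
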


\section*{ Acknowledgements}
This work is supported by the National Natural Science
Foundation of China 11771070, 12061078 and the Younth Scientific Research Project of Yili Normal University 2023YSQN002.
 The authors thank the referee for his (or her) careful reading and helpful comments.


\begin{thebibliography}{00}
\bibitem{Wo1} M. Wodzicki.:  Non-commutative residue I, Lecture Notes in Math., Springer, New York, Vol. 1289
320-399, (1987).

\bibitem{Gu} V. W. Guillemin.: A new proof of Weyl's formula on the asymptotic distribution of eigenvalues. Adv. Math. 55, no. 2, 131-160, (1985).

 \bibitem{DL} Dabrowski L, Sitarz A, Zalecki P. Spectral Metric and Einstein Functionals. Advances in Mathematics, 427, 109128, (2023).
 
  \bibitem{PBG}P. B. Gilkey.: Invariance theory, the Heat equation, and the Atiyah-Singer Index theorem. Inc., USA, (1984).
  
 \bibitem{FGV}H. Figueroa, J. Gracia-Bond$\acute{i}$a, and J. V$\acute{a}$rilly.: Elements of Noncommutative Geometry. Birkh$\ddot{a}$user Boston, (2001).

 \bibitem{Co1} A. Connes.: Quantized calculus and applications.  XIth International Congress of Mathematical Physics(Paris,1994),
 Internat Press, Cambridge, MA, 15-36, (1995).
 
 \bibitem{Co2} A. Connes.: The action functinal in Noncommutative geometry. Comm. Math. Phys. 117, 673-683, (1988).
 
\bibitem{Co3} A. Connes, J. Lott.: Particle models and Non-commutative geometry. Nucl. Phys. B Proc.Supp 18B, 29-47, (1990).

\bibitem{Ka} D. Kastler.: The Dirac Operator and Gravitation. Comm. Math. Phys. 166, 633-643, (1995).

\bibitem{KW} W. Kalau and M. Walze.: Gravity, Noncommutative geometry and the Wodzicki residue. J. Geom. Physics. 16, 327-344,(1995).

\bibitem{FGLS} B. V. Fedosov, F. Golse, E. Leichtnam, E. Schrohe.: The noncommutative residue for manifolds with boundary.
J. Funct. Anal. 142, 1-31, (1996).

\bibitem{Wa1} Y. Wang.: Diffential forms and the Wodzicki residue for Manifolds with Boundary. J. Geom. Physics. 56, 731-753, (2006).

\bibitem{Wa3} Y. Wang.: Gravity and the Noncommutative Residue for Manifolds with Boundary. Letters in Mathematical Physics. 80, 37-56, (2007).

\bibitem{Wa4} Y. Wang.: Lower-Dimensional Volumes and Kastler-kalau-Walze Type Theorem for Manifolds with Boundary .  Commun. Theor. Phys. Vol 54, 38-42, (2010).
      
\bibitem{JMB} J. M. Bismut.: A  local  index  theorem  for  non K$\ddot{a}$hler  manifolds. Math. Ann. 284, 681-699 (1989).

\bibitem{AT} T. Ackermann and J. Tolksdorf.:  A generalized Lichnerowicz formula, the Wodzicki residue and gravity. J. Geom. Physics. 19, 143-150,(1996).

\bibitem{PS} F. Pf$\ddot{a}$ffle and C. A. Stephan.: On gravity, torsion and the spectral action principle. J. Funct. Anal. 262, 1529-1565,(2012).

 \bibitem{GHV} W. Greub, S. Halperin and R.Vanstone.: Connections, Curvature and Cohomology, Vo.1 (Academic press, New York) (1976).
 
 \bibitem{WW} J. Wang and  Y. Wang.:  Noncommutative residue  and sub-Dirac operators, J. Math. Phys. 54, 012501 (2013).
 
 \bibitem{WWw} J. Wang, Y. Wang, T. Wu, Dirac operators with torsion, spectral Einstein functionals and the noncommutative residue.
 J. Math. Phys. 64, 102505(2023).
 
\bibitem{Y} Y. Yu.: The Index Theorem and The Heat Equation Method, Nankai Tracts in Mathematics-Vol.2, World Scientific Publishing, (2001).

\bibitem{GS} G. Grubb, E. Schrohe.: Trace expansions and the noncommutative residue for manifolds with boundary.
J. Reine Angew. Math. 536, 167-207, (2001).

\bibitem{SADV}S. Alexandrov and D. Vassilevich.: Heat kernel for nonminimal operators on a K$\ddot{a}$hler manifold.
 J. Math. Phys. 37, 5715-5718, (1996).

\bibitem{BO} B. Opozda.: Bochner¡¯s technique for statistical structures. Ann. Glob. Anal. Geom. 48(4):357-395, (2015).

\bibitem{LZ} K. Liu, W. Zhang, Adiabatic limits and foliations. In: Topology, Geometry, and Algebra: Interactions and New Directions,
Contemp Math, Vol. 279. Providence, RI: American Mathematical Society, 195C208, (2001).

\bibitem{LW} K. Liu ,Y. Wang.: Adiabatic limits, vanishing theorems and the noncommutative residue. Science in China Series A: Mathematics. 52,12, 2699-2713,(2009).
\end{thebibliography}
\end{document}